\documentclass[leqno,12pt]{amsart}

\usepackage{amsfonts,amssymb,amsgen, stmaryrd, amsopn}
\usepackage[leqno]{amsmath} 
\usepackage[T1]{fontenc}
\usepackage{color}
\usepackage{kpfonts}
\usepackage[latin1]{inputenc} 
\usepackage[french,english]{babel}
\usepackage{graphicx}
\usepackage{graphpap}
\usepackage{hyperref}

\usepackage{xcolor}

\setlength\topmargin{-1.2cm}

 \setlength\textheight{22cm} 
 \setlength\textwidth{15.5cm}
 \setlength\oddsidemargin{0.3cm} \setlength\evensidemargin{0.1cm}

\newtheorem{theoreme}{Theorem}

\newtheorem{proposition}{Proposition}[section]
 \newtheorem{lemme}{Lemma}[section]
 \newtheorem{definition}{Definition}[section]
 \newtheorem{rem}{Remark}[section]
 
% \theoremheaderfont{\sc}
\numberwithin{equation}{section}
% \numberwithin{theoreme}{section}
% \numberwithin{proposition}{section}

%%%% Macros Fab

%%%%%%%%%%%MACROS%%%%%%%%%%%%%%%%%%%%%%%%%%%%%%%%

\renewcommand\Re{\mathrm{Re}\,} 
\newcommand\R{{\mathbb R}} \newcommand\N{{\mathbb N}}
\newcommand\Z{{\mathbb Z}} 
\newcommand\C{{\mathbb C}}

\newcommand\supp{{\mathrm{supp}\hskip 0.05cm}}

\renewcommand{\Re}{  \text{Re}   }

%%%%%%%%%%%%%%%%%%%%%%%%%%%%%%%%%%%%%%%%%%

 \def\cdotv{\raise 2pt\hbox{,}}
\def\div{\hbox{div}\,}

%%%% Macros Fab

%%%\ProvidesPackage{mymacros}
%%%%%%%%%%%MACROS%%%%%%%%%%%%%%%%%%%%%%%%%%%%%%%%

\renewcommand\Re{\mathrm{Re}\,}

\makeatletter
\def\@tvsp{\mathchoice{{}\mkern-4.5mu}{{}\mkern-4.5mu}{{}\mkern-2.5mu}{}}
\def\ltrivert{\left|\@tvsp\left|\@tvsp\left|}
\def\rtrivert{\right|\@tvsp\right|\@tvsp\right|}
\makeatother

%\newcommand{\dim}{\mathrm{dim}}

%\def\<#1,#2>{\left<#1,#2\right>}

%%%%%%%%%%%%%%%%%%%%%%%%%%%%%%%%%%%%%%%%%%

 \def\cdotv{\raise 2pt\hbox{,}}
\def\div{\hbox{div}\,}

\begin{document}
\title[Standing waves for NLS with partial confinement]{Existence and Stability 
of standing waves for supercritical NLS with a Partial Confinement}

\author[Jacopo Bellazzini]{Jacopo Bellazzini}%$^\dagger$}
\address{Jacopo Bellazzini
\newline\indent
Universit\` a di Sassari
\newline\indent
Via Piandanna 4, 07100 Sassari, Italy}
\email{jbellazzini@uniss.it}

\author[Nabile Boussa\"id]{Nabile Boussa\"id}%$^*$}
\address{Nabile Boussa\"id
\newline\indent
Laboratoire de Math\'ematiques (UMR 6623)
\newline\indent
Univ. Bourgogne Franche-Comt\'e
\newline\indent
16, Route de Gray 25030 Besan\c{c}on Cedex, France}
\email{nabile.boussaid@univ-fcomte.fr}

\author[Louis Jeanjean]{Louis Jeanjean}%$^*$}
\thanks{This work has been carried out in the framework of the project NONLOCAL (ANR-14-CE25-0013),
funded by the French National Research Agency (ANR). J.B. was supported by Gnampa project 2016 "Equazioni nonlineari dispersive". N.V. was supported by the research project PRA 2016.}
\address{Louis Jeanjean
\newline\indent
Laboratoire de Math\'ematiques (UMR 6623)
\newline\indent
Univ. Bourgogne Franche-Comt\'e
\newline\indent
16, Route de Gray 25030 Besan\c{c}on Cedex, France}
\email{louis.jeanjean@univ-fcomte.fr}

\author[Nicola Visciglia]{Nicola Visciglia}%$^*$}
\address{Nicola Visciglia
\newline\indent Dipartimento di Matematica 
\newline\indent
Università Degli Studi di Pisa
\newline\indent
Largo Bruno Pontecorvo 5 I - 56127 Pisa
}
\email{viscigli@dm.unipi.it}

%\author{Nicola Visciglia}
 % \address{
 % Laboratoire J. A. Dieudonn\'e, UMR 7351\\
 %  Universi'e de Nice Sophia-Antipolis\\
 %  Parc Valrose\\
 %  F-06108 Nice Cedex 02}%\\
%  et Institut universitaire de France}
% \email{fabrice.planchon@unice.fr}
%   \thanks{The author was partially supported by A.N.R. grant SWAP}
\date{}
 
 \begin{abstract}
We prove the existence of orbitally stable ground states to NLS with a partial confinement together with qualitative and symmetry properties. This result is obtained for nonlinearities which are $L^2$-supercritical, in particular we cover the physically relevant cubic case. The equation that we consider is the limit case of the cigar-shaped model in BEC. 
\end{abstract}

 \maketitle
 \par \noindent

\section{Introduction}

The aim of this work is the study of the existence, stability, qualitative and symmetry properties 
of standing waves associated with the following Cauchy problem:
\begin{equation}\label{NLS}
\begin{cases}i\partial_t u +\Delta u -(x_1^2+ x_2^2) u = - u|u|^{p-1}, \quad
(t, x_1, x_2, x_3)\in \R \times \R^3, 
\\
u(0, x)=\varphi,
\end{cases}
\end{equation}
with $1+4/3\leq p<5$. Notice that the  range of nonlinearities are  respectively $L^2$-supercritical and $H^1$-subcritical. It is well known that in this range the ground states of the translation invariant Nonlinear Schr\"odinger Equation (NLS), i.e. \eqref{NLS} where
we remove the term $(x_1^2+ x_2^2) u$, are unstable by blow-up, see~\cite{BC,Ca}. As we shall see the situation changes completely if we add a confinement.

Note that the
physically relevant cubic nonlinearity $p=3$ is covered by our assumptions. Cubic NLS,  often referred as  Gross-Pitaevskii equation (GPE), is  very important in physics. NLS when $p=3$  with an external trapping potential  gives a good description for the   Bose-Einstein condensate (BEC).
  Bose-Einstein condensate  consists in a macroscopic ensemble of bosons that at  very low temperature  occupy  the same quantum states. Such kind of condensate  has been experimentally observed only in the last two decades \cite{AE} and this fact has stimulated a wave of
  activity on both the theoretical and the numerical side. In the experiment BEC is observed in presence of a confined potential trap and its macroscopic  behavior  strongly  depends on the shape of this trap potential. From the mathematical side, the properties of BEC at temperature much smaller than the critical temperature can be well described by 
the three-dimensional Gross-Pitaevskii equation (GPE) for the macroscopic wave function $u$ given by, see. e.g. \cite{GSS}, \cite{DG}
\begin{equation}
\label{eq:evolution}
i \hbar \partial_t u + \frac{\hbar^2}{2m}\Delta u- W(x_1,x_2,x_3) u=N U_0|u|^2 u, \quad (t,x_1,x_2,x_3) \in \R \times \R^3
\end{equation}
where $\hbar$ is the reduced Plank constant, $m$ the particle mass, $N$ the number of particles, $U_0$ is a constant accounting for the interaction among the particles and $W$ an external potential.% which in experiments is usually harmonic.

A rigorous derivation of GPE  from the $N$ body quantum system of particles has been deeply investigated
in recent years  \cite{ESY,ESY2,LS,LS2} as well as numerical analysis associated to the dynamics of  GPE, see e.g \cite{BJM}. 
GPE incorporates information about the trapping potential as well as the interaction among the particles. Repulsive interactions correspond to the case $U_0>0$, while
attractive interactions to the case $U_0<0$. The trapping potential in experiments is usually harmonic, i.e. $W=\omega_{x_1}^2x_1^2+\omega_{x_2}^2x_2^2+\omega_{x_3}^2x_3^2$ with $\omega_{x_1}\neq 0,\omega_{x_2} \neq 0,\omega_{x_3} \neq 0$.
%and the wave function $\psi$ is normalized as
%$$\int_{\R^3}|\psi(x,t)|^2dx=1.$$
In this case it is known, see~\cite{FuOh1}, that there exist orbitally stable solitary waves. We show that in the intermediate case of partial confinement stable solutions still exist. 
Our assumptions cover the case of Bose-Einstein condensate with attractive interactions and partial confinement. The latter includes the limit case of the so-called cigar-shaped model, see~\cite{BaCa}.

To our knowledge existence and stability of standing states for BEC in presence of a partial confinement has not been studied in the literature. %It is worth mentioning that from a mathematical viewpoint, we have to handle the lack of compactness in one direction. This makes more subtle the minimizing procedure. 

\medskip

In order to go further and to present our main results,
we fix some definition that will be useful in the sequel. We consider
$$H=\big \{u\in H^1(\R^3;\C)\mbox{ s.t. } \int_{\R^3} (x_1^2+x_2^2) |u(x_1, x_2, x_3)|^2 dx < \infty
\big \}$$
where $x=(x_1, x_2, x_3)\in \R^3$.
We also introduce 
$$E(u)=\frac 12 \|u\|_{\dot H}^2 - \frac 1{p+1}\int_{\R^3} |u(x_1, x_2, x_3)|^{p+1} dx,$$
where $$\|u\|_{\dot H}^2 = \sum_{i=1}^3 \int_{\R^3} |\partial_{x_i} u(x_1, x_2, x_3)|^2 dx + 
\int_{\R^3} (x_1^2+x_2^2) |u(x_1, x_2, x_3)|^2 dx.$$
We shall need the following sets:
\begin{equation}\label{def:SB}
S_r=\big \{ u\in H \mbox{ s.t. }\int_{\R^3} |u(x_1, x_2, x_3)|^2 dx=r^2\big \},\quad B_R=\{u\in H\mbox{ s.t. }\, \|u\|_{\dot H}^2\leq R^2\}.
\end{equation}
%$$\stackrel{\circ}{B}_R=\{u\in H| \int |\nabla_x u(x)|^2 dx + 
%\int (x_1^2+x_2^2) |u(x)|^2 dx<R^2\};$$
%$$\partial B_R=\{u\in H| \int |\nabla_x u(x)|^2 dx + 
%\int (x_1^2+x_2^2) |u(x)|^2 dx=R^2\}.$$
Following \cite{CL} a first idea to construct orbitally stable standing waves
to \eqref{NLS}
is to consider the following constrained minimization problems
$$I_r=\inf_{u\in S_r} E(u)$$
and to try to prove the compactness of the minimizing sequences 
up to the action of the group of the symmetries.
If one assumes that $1<p<1+4/3$ then $I_r>-\infty$ for any $r>0$ and this approach by global minimization would indeed work. On the contrary in the case
$p>1+ 4/3$, we have 
$I_r=-\infty$ for any $r>0$. Indeed notice that
if we fix $\psi\in C^\infty_0(\R^3)$ and we define
$\psi_\lambda(x)=\lambda^{3/2} \psi(\lambda x)$
then
$$\|\psi_\lambda(x)\|_{L^2}=\|\psi(x)\|_{L^2}
\hbox{ and }
E(\psi_\lambda)=\lambda^2 \int_{\R^3} |\nabla_x \psi|^2 dx - \lambda^{\frac{3(p-1)}2}
\|\psi\|_{L^{p+1}}^{p+1}+ \frac 1{\lambda^2} \int_{\R^3} (x_1^2+x_2^2) 
|\psi|^2 dx$$
and hence 
$E(\psi_\lambda)\rightarrow -\infty$
as $\lambda\rightarrow \infty$.

For this reason and since we are assuming that $1+4/3\leq p<5$ 
we shall construct orbitally stable solutions by considering a suitable
localized version of the minimization problems
above.
More precisely for every given $\chi>0$
we consider the following localized minimization problems:
\begin{equation}\label{Jrchi}J_r^\chi=\inf_{u\in S_r\cap B_\chi} E(u),\end{equation}
recall definitions~\eqref{def:SB}. Clearly $J_r^\chi>-\infty$ but it is worth noticing that in principle it could
be $S_r\cap B_\chi=\emptyset$, as a consequence
of the following computation:
\begin{equation*} 2 \int_{\R^3} |u|^2 dx=\int_{\R^3} |u|^2 \div (x_1, x_2) dx = -\int_{\R^3} (\partial_{x_1} (|u|^2) x_1 +
\partial_{x_2} (|u|^2) x_2) dx$$$$
\leq  2 \|\nabla_{x_1, x_2} u\|_{L^2}
\sqrt{\int_{\R^3} (x_1^2+ x_2^2) |u|^2 dx}
\leq \|u\|_{\dot H}^2.
\end{equation*}
%To be more precise in the sequel
%we introduce a suitable quantity associated with 
%the following $2d$ minimization problem:
%$$\lambda_0=\inf_{\int |\psi|^2 dx_1 dx_2=1} 
%\big (\int |\nabla_{x_1,x_2} \psi|^2 dx_1 dx_2 + \int (x_1^2 + x_2^2) 
%|\psi|^2 dx_1dx_2\big ).$$

The first aim of this work is to show that for every $\chi>0$ 
there exists $r_0=r_0(\chi)>0$ such that
$S_r\cap B_\chi\neq \emptyset$ for $r<r_0$ and moreover
all minimizing sequences to $J_r^\chi$ are compact, up to the action
of translations w.r.t. $x_3$, provided that $r<r_0$.
In order to guarantee that the minimizers of \eqref{Jrchi} are critical points
of $E$ restricted on $S_r$ it is also necessary to show
that they do not belong to the boundary of $B_\chi\cap S_r$. Then it is classical, see for example \cite[Proposition 14.3]{Ka}, that for any minimizer $u$ there exists $\lambda = \lambda(u) \in \R$ such that the Euler-Lagrange equation
$$- \Delta u + (x_1^2+x_2^2)u - u |u|^{p-1} = \lambda u$$
holds. The associated standing wave is then given by $e^{-i \lambda t}u(x)$.
Concerning the stability note that the Cauchy wellposedness in $H$ is established in~\cite{ACD}. For the sake of completeness, we recall the notion of stability of a set ${\mathcal M}\subset H$ under the flow associated with \eqref{NLS} namely
\[
 \forall \varepsilon >0,\, \exists \delta>0, \quad \inf_{v \in {\mathcal M} } \|\varphi - v\|_{H}
 <\delta\Longrightarrow \sup_{t \in \R} \inf_{v \in {\mathcal M}} \|u(t) - v\|_H  <\varepsilon,
\]
where the norm $||\cdot||_H$ is given by
$$\|v\|_H ^2= \|v\|_{\dot H}^2 + \int_{\R^3} |v(x_1, x_2, x_3)|^2 dx.$$
We can now state our first result.
\begin{theoreme}\label{th:main}
Let $1+4/3\leq p<5$ be fixed. For every $\chi>0$ there exists $r_0=r_0(\chi)>0$ such that:
\begin{enumerate}
\item\label{th:mainPoint1} $S_r\cap B_\chi\neq \emptyset, \quad \forall r<r_0;$
\item\label{th:mainPoint2} we have the following inclusions:
$$\emptyset\neq {\mathcal M}_r^{\chi}\subset B_{\chi r}, \quad \forall r<r_0$$
 where
 $${\mathcal M}_r^\chi = \{u\in S_r \cap {B}_{\chi}
\mbox{ s.t. } E(u)=J_r^\chi\};$$
\item\label{th:mainPoint3} the set ${\mathcal M}_r^\chi$ is stable
under the flow associated with \eqref{NLS} for any $r<r_0$.
 \end{enumerate}
%We can choose $k_0>0$ such that
%$$\forall k>k_0 \quad \exists r=r(\chi) \hbox{ s.t. }$$ with the following property: 
%\begin{itemize}
%\item $S_r\cap B_{kr}\neq \emptyset, \quad \forall r>0;$
%\item if $J_r=\inf_{S_r\cap B_{kr} }E(u)$, then $J_r>-\infty, \quad  \forall r>0$; 
%\item let $u_n\in S_r \cap  B_{kr}$ be such that
%$E(u_n)\rightarrow J_r$ then there exists $a_n\in \R$ such that, up to subsequence
%$v_n=u_n(x_1, x_2, x_3-a_n)$ is compact in $H$ and
%converges to $\bar v\in \hbox{ int } B_{kr}\cap S_r$
%;
%\item
%the set $${\mathcal M}_r=\{u\in S_r \cap \stackrel{\circ}{B}_{kr}
%| E(u)=J_r\}$$ is not empty for $r<r_0$;
%\item for every $r<r_0$ the set ${\mathcal M}_r$ is orbitally stable by the flow associated with \eqref{NLS}.
%\end{itemize}
\end{theoreme}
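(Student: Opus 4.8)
The plan is to treat the three assertions in order, with one coercivity estimate carrying most of the weight. Assertion (1) is immediate: fixing any $w\in H$ with $\|w\|_{L^2}=1$ and setting $u=rw$ gives $\|u\|_{L^2}=r$ and $\|u\|_{\dot H}=r\|w\|_{\dot H}\le\chi$ as soon as $r\le\chi/\|w\|_{\dot H}$, so $S_r\cap B_\chi\ne\emptyset$ for $r$ small. The real starting point is a lower bound for $E$ on $S_r\cap B_\chi$. I would apply Gagliardo--Nirenberg, $\|u\|_{L^{p+1}}^{p+1}\le C\|\nabla u\|_{L^2}^{\alpha}\|u\|_{L^2}^{p+1-\alpha}$ with $\alpha=\tfrac{3(p-1)}2\in[2,6)$, and use $\|\nabla u\|_{L^2}\le\|u\|_{\dot H}$ together with $\|u\|_{L^2}=r$ to obtain
\[
E(u)\ \ge\ \tfrac12\|u\|_{\dot H}^2-c\,\|u\|_{\dot H}^{\alpha}\,r^{\,p+1-\alpha}.
\]
Since $p+1-\alpha=\tfrac{5-p}2>0$ and $\|u\|_{\dot H}\le\chi$, the subtracted term is negligible against $\|u\|_{\dot H}^2$ as $r\to0$; hence there is $r_0(\chi)$ so that $E(u)\ge\tfrac14\|u\|_{\dot H}^2$ on $S_r\cap B_\chi$ for $r<r_0$. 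In particular $J_r^\chi>0$ and every minimizing sequence obeys $\|u_n\|_{\dot H}^2\le 4J_r^\chi+o(1)$.

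To locate $J_r^\chi$ sharply I would use the test family $w^{(\lambda)}=\phi_0(x_1,x_2)\,\lambda^{1/2}g(\lambda x_3)$, where $\phi_0$ is the normalized ground state of the $2$D harmonic oscillator (so the first two pieces of $\|\cdot\|_{\dot H}^2$ contribute exactly $2\|g\|_{L^2}^2$) and $g$ is spread out by $\lambda\to0$. A direct computation gives $\|w^{(\lambda)}\|_{\dot H}^2=2+\lambda^2\|g'\|_{L^2}^2$ and $\|w^{(\lambda)}\|_{L^{p+1}}^{p+1}\simeq\lambda^{(p-1)/2}$, so optimizing $E(rw^{(\lambda)})$ in $\lambda$ yields $J_r^\chi\le r^2-c'r^{2\beta}$ with $\beta=\tfrac{3+p}{5-p}>1$; in particular $J_r^\chi<r^2$. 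Combined with the coercivity bound this forces any minimizer to satisfy $\|u\|_{\dot H}^2\lesssim r^2$, i.e. to sit strictly inside $B_\chi$ for $r<r_0$, which is the inclusion $\mathcal M_r^\chi\subset B_{\chi r}$ of (2). Because the $B_\chi$ constraint is then inactive, the minimizer is a free critical point on $S_r$ and the Euler--Lagrange equation with a multiplier $\lambda$ follows in the standard way.

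For existence I would run concentration--compactness, exploiting that the confinement makes the $(x_1,x_2)$ directions compact while only $x_3$-translations remain non-compact. Tightness in $(x_1,x_2)$ is free from $\int_{x_1^2+x_2^2>M}|u_n|^2\le M^{-1}\chi^2$; applying the trichotomy to the $x_3$-mass density $\rho_n(x_3)=\int_{\R^2}|u_n|^2\,dx_1dx_2$ leaves vanishing, dichotomy, or compactness after translation in $x_3$. Vanishing forces $\|u_n\|_{L^{p+1}}\to0$, whence $\liminf E(u_n)\ge r^2>J_r^\chi$, a contradiction. Dichotomy is excluded by strict subadditivity $J_r^\chi<J_{r_1}^\chi+J_{r_2}^\chi$ for $r^2=r_1^2+r_2^2$, which follows from the asymptotics $J_r^\chi\simeq r^2-c'r^{2\beta}$ and the strict superadditivity of $t\mapsto t^\beta$ ($\beta>1$): indeed $(r_1^2+r_2^2)^\beta>r_1^{2\beta}+r_2^{2\beta}$. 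The surviving compactness alternative, processed through Br\'ezis--Lieb (splitting the $L^2$, $L^{p+1}$ and $\dot H$ norms along $u_n=u+v_n$), gives $\|u\|_{L^2}=r$, strong $L^{p+1}$ convergence, hence $E(u)=J_r^\chi$ with $u\in S_r\cap B_\chi$; the Hilbertian identity for $\|\cdot\|_{\dot H}$ upgrades this to strong convergence in $H$, proving (2).

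For (3) the new difficulty, relative to the classical Cazenave--Lions scheme, is that $B_\chi$ is not preserved by the flow. I would resolve this with the energy barrier coming from the coercivity estimate: on $\partial B_\chi\cap S_r$ one has $E\ge\tfrac14\chi^2$, whereas near $\mathcal M_r^\chi$ the energy is $\le J_r^\chi<r^2<\tfrac14\chi^2$ for $r<r_0$. Since $\|u(t)\|_{\dot H}$ is continuous in $t$ (well-posedness in $H$, \cite{ACD}) while $E$ and $\|u\|_{L^2}$ are conserved, a solution issued from data close to $\mathcal M_r^\chi$ cannot reach $\partial B_\chi$ and therefore stays in $S_{r'}\cap B_\chi$ with $r'\to r$. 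The usual contradiction argument then applies: were $\mathcal M_r^\chi$ unstable there would be $\varphi_n\to\mathcal M_r^\chi$ in $H$ and times $t_n$ with $\inf_{v\in\mathcal M_r^\chi}\|u_n(t_n)-v\|_H\ge\varepsilon_0$; but $u_n(t_n)$ lies in $B_\chi$ with mass $\to r$ and energy $\to J_r^\chi$, so it is an almost-minimizing sequence and, by the compactness just proved (together with continuity of $r\mapsto J_r^\chi$), converges up to an $x_3$-translation to some $v\in\mathcal M_r^\chi$; as $\mathcal M_r^\chi$ is invariant under $x_3$-translations this contradicts the lower bound $\varepsilon_0$. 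I expect the two genuine obstacles to be the strict subadditivity, which needs the two-sided control $J_r^\chi\simeq r^2-c'r^{2\beta}$ rather than just an upper bound, and the trapping of the flow inside $B_\chi$; the latter is precisely what lets stability survive the passage from a global to a localized constraint.
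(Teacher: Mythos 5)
Your overall architecture matches the paper's: the same scaling argument for nonemptiness, the Gagliardo--Nirenberg coercivity giving the local-minima geometry and the inclusion $\mathcal M_r^\chi\subset B_{Cr}$ (the paper's Lemma \ref{lem:localmin}), the test family $\Psi_0(x_1,x_2)\otimes\sqrt{\mu}\,\psi(\mu x_3)$ giving $J_r^\chi<r^2\Lambda_0/2=r^2$ (Lemma \ref{lem:comparison}, resting on $\Lambda_0=\lambda_0$ from Lemma \ref{lem:spect}), exclusion of vanishing by comparing with the spectral bound $\tfrac12\|u\|_{\dot H}^2\geq r^2$, Br\'ezis--Lieb splitting for the compactness step, and the energy-barrier trapping inside $B_\chi$ for stability (which you actually spell out more explicitly than the paper does; the paper delegates this to the classical Cazenave--Lions scheme after the compactness statement).

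The genuine gap is exactly where you flag it: the exclusion of dichotomy. You invoke strict subadditivity $J_r^\chi<J_{r_1}^\chi+J_{r_2}^\chi$ for $r^2=r_1^2+r_2^2$, derived from a \emph{two-sided} expansion $J_r^\chi\simeq r^2-c'r^{2\beta}$, $\beta=\frac{p+3}{5-p}$, plus superadditivity of $t\mapsto t^\beta$. But your argument only establishes the upper bound $J_r^\chi\leq r^2-c'r^{2\beta}$. The natural lower bound available at this stage is $J_r^\chi\geq r^2-Cr^{p+1}$ (from $\|u\|_{L^{p+1}}^{p+1}\lesssim r^{p+1}$ on $B_{\chi r}$, cf.\ \eqref{44} in the paper), and since $2\beta>p+1$ on the whole range $1+4/3\le p<5$ (e.g.\ $2\beta=6>4$ at $p=3$), the lower-bound error \emph{dominates} the upper-bound gain: $J_{r_1}^\chi+J_{r_2}^\chi-J_r^\chi\geq c'r^{2\beta}-C(r_1^{p+1}+r_2^{p+1})$ has no usable sign. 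Sharpening the lower bound to $r^2-Cr^{2\beta}$ with a matching constant is essentially the refined profile analysis of Theorem \ref{complex} (estimate \eqref{eq:profile}) and more, and even then one would need the constants to agree to leading order. The paper avoids all of this with Lemma \ref{lem:dic}: for $0<r<s<\min\{1,r_0\}$ one has the strict scaled subadditivity $r^2J_s^\chi<s^2J_r^\chi$, proved by taking near-minimizers $v_n\in B_{\chi r}$ (so that $\frac{s}{r}v_n\in B_{\chi s}\subset B_\chi$ remains admissible) and using only the uniform non-vanishing $\liminf\|v_n\|_{L^{p+1}}^{p+1}\geq\delta_0>0$ from Lemma \ref{lem:novanishing}; then in the Br\'ezis--Lieb splitting, with $l^2+\bar r^2=r^2$ and $0<\bar r<r$, one gets $J_r^\chi\geq J_l^\chi+J_{\bar r}^\chi>\frac{l^2}{r^2}J_r^\chi+\frac{\bar r^2}{r^2}J_r^\chi=J_r^\chi$ (here $J_r^\chi>0$ fixes the direction of the inequality), a contradiction requiring no asymptotic expansion whatsoever. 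This homogeneity trick is the missing ingredient in your proposal; without it (or a full proof of the two-sided asymptotics) the dichotomy case is not closed. A minor additional point: your coercivity $E\geq\frac14\|u\|_{\dot H}^2$ yields $\mathcal M_r^\chi\subset B_{2r}$, which gives the stated inclusion $\mathcal M_r^\chi\subset B_{\chi r}$ only for $\chi\geq 2$; since every $u\in S_r$ satisfies $\|u\|_{\dot H}^2\geq 2r^2$, the radius constant cannot go below $\sqrt2$, so for the stated form one should keep the coercivity constant close to $\tfrac12$ as in the paper's $f_r(s)\geq\frac38 s^2$ computation.
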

\begin{rem} The core of the proof of the stability will follow from the classical argument by~\cite{CL} once we obtain the following: for all $r<r_0$, for all minimizing sequence $(u_n) \subset S_r\cap B_\chi$ 
such that  $\lim_{n\rightarrow \infty} E(u_n)=J_r^\chi$ there exists $(k_n) \subset \R$ such that  
$(u_n(x_1, x_2, x_3 - k_n))$ is compact in $H$. Notice that once this is established
then in particular it implies the existence of minimizers.
% The property ${\mathcal M}_r^{\chi}\subset B_{\chi r}$ ensures that the minimizers are not in the boundary of $S_r\cap B_\chi$.
\end{rem}

\begin{rem}
By the previous statement (more precisely by the second property)
we can deduce that 
for any couple $\chi_1, \chi_2>0$ we have
$${\mathcal M}_r^{\chi_1} = {\mathcal M}_r^{\chi_2}, \quad \forall r<<1.$$
% For any $\chi>0$ as soon as $r<<1$, ${\mathcal M}_r^{\chi_1} = {\mathcal M}_r^{\chi_2}\subset B_\chi$.  
This shows that the above minimizers are independent of choice of $\chi$ for small $r$. 
\end{rem}

% \begin{rem}
% As specified in Theorem \ref{complex} below,
% every function in ${\mathcal M}_r^{\chi}$
% satisfies an elliptic equation involving a Lagrange multiplier, indeed they are standing waves
% for \eqref{NLS}.
% \end{rem}
\begin{rem}
The main difficulty in Theorem \ref{th:main} is the lack of compactness,
due to the translation invariance w.r.t. $x_3$. Indeed if one assumes that the potential is fully harmonic, namely that $x_1^2 + x_2^2$ is replaced by $x_1^2 + x_2^2+ x_3^2$ then one benefits from the compactness of the inclusion of $H$ (with $x_1^2 + x_2^2+ x_3^2$) into $L^q(\R^3)$ where $q \in [2,6)$ see for example \cite[Lemma 3.1]{Zh} and the proof of Theorem \ref{th:main} would be rather simple. We overcome
this lack of compactness by using a concentration-compactness argument,
that we have to adapt in a suitable localized minimization problem (indeed global minimizers cannot exist
because of the $L^2$ supercritical character of the nonlinearity).
We believe that this situation is quite new in the literature.
\end{rem}

\begin{rem}
In Lemma \ref{lem:localmin} we prove that
$$
\inf_{S_r \cap B_{\chi r/2}} E(u) <\inf_{S_r \cap (B_{\chi}\setminus B_{\chi r}) }E(u),
\quad \forall r<r_0.
$$
where $r_0 = r_0(\chi) >0$ is given in Theorem \ref{th:main}. Then $E$ has a geometry of local minima. It is important to observe that this geometry still holds when $(x_1^2 + x_2^2)$ in 
\eqref{NLS} is replaced by a general potential $V(x)$. In particular it is the case when $V \equiv 0$ (no potential) or $V(x) = x_1^2 + x_2^2 + x_3^2$ (full harmonic potential). Actually this geometry only depends on the assumption that $p \geq 1 + 4/3$. Now the fact that the geometry implies the existence of a local minimizer depends on a balance between the potential and the strength of the nonlinear term. If $V \equiv 0$ then it is well known that, for $1 + 4/3 < p <5$, no local minimizer exists (in practise any minimizing sequence is vanishing). On the contrary when $V(x) = x_1^2 + x_2^2 + x_3^2$, as already indicated, a local minimizer exists for any $1 + 4/3 \leq p < 5$. In between, namely when only a partial confinement is present, a restriction on $p$ is necessary. This is not apparent in the statement of Theorem \ref{th:main} but it is specified in Remark \ref{rem:extension}. Actually, what seems to be an essential requirement, is that the power $p$ is subcritical with respect to the set of variables bearing no potential. This leads to the restriction $p < 1 + 4/(n-d)$ in Remark \ref{rem:extension} and at the level of the proof this condition is used to establish the equivalent of Lemma \ref{lem:comparison}. Roughly speaking the larger is the number of dimension of confinement, the larger can be the interval of admissible $p \in \R$.\\
The nonlinearity is thus supercritical as the energy functional is not bounded from below. It is supercritical in the dynamical sense as well. In \cite{Carles2002}, for full confinement, the power $p=1+4/3$ is critical as the Gagliardo-Nirenberg level is a threshold for global wellposedness. Even though the equation has no scaling invariance, it can be scaled asymptotically to bear the same criticality levels as the usual nonlinear Schrödinger equation.   
\end{rem}

\begin{rem}
Our results should be compared to the corresponding ones for NLS posed on $\R\times M^2$,
where $M^2$ is a $2d$ compact manifold, see~\cite{TTV}.
In the latter case, there exists a special family of solutions that depend
only on the dispersive variable $\R$ and that can be extended in a trivial way on the transverse direction $M^2$. 

Notice that in our case the situation is different, even if the partial confinement provides some compactness analogously to the $\R\times M^2$ setting. The main point is that 
in our situation we have infinite volume and then the one line solitons cannot be extended in a trivial way on the whole space, by preserving the property of being finite energy. In particular is it is not clear that standing waves solutions exist neither that the approach of Weinstein (see \cite{W}, \cite{RT}), based on a linearization argument, can work to establish the stability.
\end{rem}

%\begin{rem}
%It is likely that the solutions obtained in Theorem \ref{th:main} are ground states in the sense that $J_r^\chi$ corresponds to the least energy level among all solutions of %(\ref{NLS}) having this $L^2$ norm. In that direction we refer to \cite[Theorem 1.6]{BeJe}.
%\end{rem}

Our second result provides properties of the minimizers obtained in Theorem \ref{th:main}.
%We need to introduce the following set:
%$${\mathcal N}_r=\big \{\psi\in H^1(\R) \hbox{ s.t. } \frac 12 \int_\R |\partial_x \psi|^2 dx
%- \frac 1{p+1} \int_\R|\Psi_0|^{p+1} |\psi|^{p+1} dx=K_r
%\big \}$$
%where
%$$K_r=\inf_{\|\psi\|_{L^2(\R)}=r} 
%\big( \frac 12 \int_\R |\partial_x \psi|^2 dx
%- \frac 1{p+1} \int_\R|\Psi_0|^{p+1} |\psi|^{p+1} dx\big).$$

\begin{theoreme}\label{complex} Every minimizer obtained in  Theorem \ref{th:main} 
(that is in principle $\C$-valued) is of the form $e^{i\theta} f(x_1,x_2,x_3)$ where $f$ is a positive real valued minimizer and $\theta\in \R$. Moreover for some $k \in \R$, $f(x_1,x_2,x_3-k)$ is radially symmetric and decreasing w.r.t. $(x_1,x_2)$ 
and w.r.t. $x_3$. In addition for every fixed $\chi>0$,
for every $r<r_0(\chi)$ and for every $u\in {\mathcal M}_r^{\chi}$
there exists $\lambda=\lambda(u)>0$ such that
$$-\Delta u + (x_1^2+ x_2^2) u -|u|^{p-1} u=\lambda u, \quad \hbox{ on } \R^3$$
with the estimates:
\begin{equation}\label{controlLagrange}
(1 - C r^{p-1}) \Lambda_0 \leq \lambda < \Lambda_0
\end{equation}
where $C>0$ is an universal constant and 
$\Lambda_0=\inf (\hbox{ \em spec }(-\sum_{i=1}^3 \partial_{x_i}^2 + (x_1^2+x_2^2))).$\\
Moreover, we have
\begin{equation}\label{eq:profile}
 \sup_{u\in \mathcal{M}_r^\chi}\left\|u(x_1,x_2,x_3)  -
 \varphi_0(x_3)
\Psi_0(x_1,x_2)\right\|_{\dot{H}} = o(r)
\end{equation}
%and
%\begin{equation}\label{eq:profile3}
 %\sup_{u\in \mathcal{M}_r^\chi, \psi\in {\mathcal N}_r}
 %\|\partial_{x_3} \psi(x_3)-\partial_{x_3} \varphi_0(x_3)\|_{L^2(\R)}=o(r).
 %\end{equation}
where $\Psi_0(x_1,x_2)$ is the unique normalized positive eigenvector of the quantum harmonic oscillator $-\sum_{i=1}^2 \partial_{x_i}^2 +x_1^2+x_2^2$
and  $$\varphi_0(x_3)=
\left(\int_{\R^2} u(x_1,x_2,x_3) \Psi_0(x_1,x_2) dx_1dx_2\right).$$
\end{theoreme}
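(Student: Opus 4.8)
The plan is to establish the four assertions in turn—phase factorization with positivity, the symmetry, the Euler--Lagrange equation with the two-sided bound on $\lambda$, and the asymptotic profile—using throughout the information from Theorem~\ref{th:main}, most crucially the inclusion $\mathcal{M}_r^\chi\subset B_{\chi r}$ which, for $r<1$, places every minimizer strictly inside $B_\chi$. I would start by reducing to positive real minimizers via the diamagnetic inequality: for $u\in H$ one has $|\nabla|u||\le|\nabla u|$ a.e., while the potential term and the $L^2$ and $L^{p+1}$ norms are unchanged when $u$ is replaced by $|u|$; hence $|u|\in S_r\cap B_\chi$ and $E(|u|)\le E(u)$. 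If $u$ is a minimizer, equality is forced, which gives $u=e^{i\theta}|u|$ for a constant $\theta\in\R$. Setting $f=|u|\ge 0$, elliptic regularity for the Euler--Lagrange equation together with the strong maximum principle (Harnack) yields $f>0$ everywhere, so the phase is globally constant.

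For the symmetry I would use rearrangements adapted to the two groups of variables. In $(x_1,x_2)$ the weight $x_1^2+x_2^2$ is radial and increasing, so Schwarz symmetrization in $(x_1,x_2)$ for a.e.\ fixed $x_3$ preserves $\|u\|_{L^2}$ and $\|u\|_{L^{p+1}}$, does not increase $\int|\nabla_{x_1,x_2}u|^2$ (P\'olya--Szeg\H{o}), and does not increase $\int(x_1^2+x_2^2)|u|^2$ (the rearrangement inequality for radial increasing weights); in $x_3$, where no potential is present, Steiner symmetrization does not increase $\int|\partial_{x_3}u|^2$ and leaves the other terms untouched. Both operations keep the function in $S_r\cap B_\chi$ and do not raise $E$, so each symmetrization of a minimizer is again a minimizer. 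Applying the equality cases to $f$ itself—the P\'olya--Szeg\H{o} equality case (Brothers--Ziemer), which needs the positivity and regularity obtained above, together with the fact that equality in the weighted inequality pins the center of the $(x_1,x_2)$ level sets to the axis $x_1=x_2=0$—forces $f(\cdot,x_3)$ to be radially symmetric decreasing about the axis for a.e.\ $x_3$, and forces $f$ to be a translate in $x_3$ of an even decreasing profile, which produces the stated $k$. I expect this step, namely making the equality analysis for the two symmetrizations simultaneously rigorous, to be the main obstacle.

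Since the minimizer lies in the interior of $B_\chi$, it is a free critical point of $E$ on $S_r$, producing $\lambda=\lambda(u)$ with $-\Delta u+(x_1^2+x_2^2)u-|u|^{p-1}u=\lambda u$. Pairing with $u$ gives $\lambda r^2=\|u\|_{\dot H}^2-\int_{\R^3}|u|^{p+1}$, and eliminating $\|u\|_{\dot H}^2$ through $E(u)=J_r^\chi$ yields $\lambda r^2=2J_r^\chi-\tfrac{p-1}{p+1}\int_{\R^3}|u|^{p+1}$, so $\lambda<2J_r^\chi/r^2$. To close the upper bound I would prove $J_r^\chi<\tfrac12\Lambda_0 r^2$ with the test function $r\,\Psi_0(x_1,x_2)\phi_\e(x_3)$, $\phi_\e(x_3)=\e^{1/2}\phi(\e x_3)$: its $\dot H$-norm equals $\Lambda_0 r^2+O(\e^2)$ while its $L^{p+1}$ contribution is of order $\e^{(p-1)/2}$, which dominates for small $\e$ precisely because $p<5$; this gives $\lambda<\Lambda_0$. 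For the lower bound, $\|u\|_{\dot H}^2\ge\Lambda_0\|u\|_{L^2}^2=\Lambda_0 r^2$ gives $\lambda\ge\Lambda_0-r^{-2}\int_{\R^3}|u|^{p+1}$, and Gagliardo--Nirenberg together with $\|u\|_{\dot H}\le\chi r$ and $\|u\|_{L^2}=r$ yields $\int_{\R^3}|u|^{p+1}\le C r^{p+1}$, whence $\lambda\ge(1-Cr^{p-1})\Lambda_0$; in particular $\lambda>0$ for $r$ small, since $\Lambda_0>0$.

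Finally, for the profile I would decompose, for a.e.\ $x_3$, $u(\cdot,x_3)=\varphi_0(x_3)\Psi_0+w(\cdot,x_3)$ with $w(\cdot,x_3)\perp\Psi_0$ in $L^2(\R^2)$, so that $\varphi_0$ is exactly the function in the statement. Using that $\Psi_0$ is the ground state of $L=-\Delta_{x_1,x_2}+(x_1^2+x_2^2)$ with eigenvalue $\Lambda_0$ and spectral gap equal to $2$, a direct computation shows that the cross terms vanish and $\|u\|_{\dot H}^2\ge\Lambda_0 r^2+2\|w\|_{L^2}^2+\int_{\R}|\varphi_0'|^2$. Combining this with $\|u\|_{\dot H}^2=2J_r^\chi+\tfrac{2}{p+1}\int_{\R^3}|u|^{p+1}\le\Lambda_0 r^2+Cr^{p+1}$ forces $\|w\|_{L^2}^2\le Cr^{p+1}$ and then $\|u-\varphi_0\Psi_0\|_{\dot H}^2=\|w\|_{\dot H}^2\le Cr^{p+1}$, with constants independent of $u\in\mathcal{M}_r^\chi$. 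Since $p>1$ the right-hand side is $o(r^2)$, which yields the claimed $o(r)$ bound uniformly over $\mathcal{M}_r^\chi$.
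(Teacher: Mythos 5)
Your treatment of the Lagrange multiplier and of the asymptotic profile is essentially the paper's own proof: the interior property $\mathcal{M}_r^\chi\subset B_{\chi r}$ makes $u$ a genuine constrained critical point, pairing with $u$ plus the comparison $J_r^\chi<\tfrac12\Lambda_0 r^2$ (which is the paper's Lemma \ref{lem:comparison}; you re-derive it with the same test function $r\,\Psi_0(x_1,x_2)\varepsilon^{1/2}\phi(\varepsilon x_3)$) gives $\lambda<\Lambda_0$, Gagliardo--Nirenberg on $B_{\chi r}$ gives the lower bound, and your orthogonal decomposition $u=\varphi_0\Psi_0+w$ with the spectral gap is the paper's Hermite expansion in different notation, yielding the same $O(r^{(p+1)/2})=o(r)$ bound. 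The phase factorization is also the paper's route, with one ordering issue you should repair: equality $\int|\nabla|u||^2=\int|\nabla u|^2$ forces a globally constant phase only \emph{after} you know $|u|>0$ and $u\in C^1$ (the phase may jump across a nodal set); the paper first proves positivity via elliptic bootstrap and the strong maximum principle, and only then runs the equality argument (Theorem \ref{elem}: write $w=\rho v$ with $|v|=1$, so $|\nabla w|^2=|\nabla\rho|^2+\rho^2|\nabla v|^2$ and equality forces $\nabla v=0$). Your proposal concludes the constant phase before establishing positivity, which is circular as written, though easily reordered.

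The genuine gap is in the $x_3$-symmetry, precisely at the step you flagged as the main obstacle. The equality case for the one-dimensional rearrangement (Brothers--Ziemer, applied to $x_3\mapsto f(x_1,x_2,x_3)$ for a.e.\ fixed $(x_1,x_2)$) only yields that each such section is a translate of a symmetric decreasing profile with a center $k(x_1,x_2)$ that \emph{a priori depends on} $(x_1,x_2)$; nothing in the symmetrization-equality machinery produces the single translation $k$ asserted in the theorem, and the paper states this pitfall explicitly in the remark closing its symmetry section. The missing idea is the reflection argument of Lopes \cite{Lo}: translate so that the hyperplane $x_3=0$ splits the $L^2$ mass in half, show $E_+(u)=E_-(u)$ so that both reflected functions are again minimizers, write the two Euler--Lagrange equations, observe the multipliers agree because $u$ and its reflection coincide on a half-space, and invoke the unique continuation principle \cite{Ke} to get $u$ even in $x_3$ about one fixed hyperplane; only then does the interval structure of the level sets from \cite{BroZi} force every $k(x_1,x_2)$ to equal $0$. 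Two further points: for the $(x_1,x_2)$-symmetry you do not need the delicate P\'olya--Szeg\H{o} equality case at all --- since the weight $x_1^2+x_2^2$ is strictly radially increasing, equality in the weighted potential term alone forces $f(\cdot,x_3)=f^*(\cdot,x_3)$ (the paper's Theorem \ref{nab}), which is simpler and more robust; and to apply the one-dimensional Brothers--Ziemer lemma you must first check it extends beyond compactly supported functions, which the paper does by verifying $(u-t)_+\in H^1(\R)$ with compact support for every $t>0$, using the $C^1$ regularity and decay of $u$ in $x_3$.
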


\begin{rem}
The existence of stable standing waves of a $L^2$ supercritical nonlinarity had already been observed in \cite{Fu,FuOh1} and more recently in \cite{BeJe,FuFoKi, NoTaVe}, under compactness assumptions corresponding to complete confinement. In particular in \cite[Theorem 2]{FuOh1}, see also \cite{Fu} for an earlier partial result,
the authors consider the ground state solutions for the equation
\begin{equation}\label{f1}
- \Delta u + V(x) u - |u|^{p-1}u = \lambda u, \quad u \in H^1(\R^3)
\end{equation}
and they show that when $1 <p < 5$ the ground states are orbitally stable 
provided that $\lambda \in (\lambda^*, \lambda_1)$, where $\lambda_1$ corresponds to the bottom of the spectrum of the operator $- \Delta + V(x)$ (which in \cite{FuOh1} is an eigenvalue). Likely the solutions of Theorem \ref{th:main}, which we recall satisfy \eqref{controlLagrange}, correspond to the ground states of 
(\ref{f1}). Note also that in \cite{FuOh2} it is proved on the contrary that the ground states of (\ref{f1}), when $\lambda$ is sufficiently close to $- \infty$, are unstable. In light of the forthcoming Remark \ref{rem:open} we do believe these solutions could be characterized as mountain pass critical points on $S_r$, for $r>0$ small.
\end{rem}

\begin{rem}
The fact that complex valued minimizers are
real valued modulo the multiplication by a complex number of modulus 1,
is well--known in the literature (see \cite{Ca,HaSt2}). 
However the proof that we give here is, we believe, shorter and less involved than the previous ones. 
We also underline that the moving plane technique should provide the symmetry result,
nevertheless we propose an alternative argument based on symmetrization
that as far as we can see has an independent interest, for instance in the context of systems.
\end{rem}

\begin{rem}
Notice that in view of the estimate on the Lagrange multipliers $\lambda$ given in Theorem
\ref{complex},
we can, somehow, consider our results as enlighting a bifurcation phenomena from $\Lambda_0$.
Notice however that $\Lambda_0$ is not an eigenvalue for $-\sum_{i=1}^3\partial_{x_i}^2 + (x_1^2+x_2^2)$, and thus it is 
unclear how to get an existence result via a standard bifurcation argument.
Of course the situation is completely different in the case of a complete 
confinement where we have a compact resolvent
and hence a discrete spectrum.
\end{rem}

In addition to the properties given in Theorem \ref{complex} we can show that our solutions are ground state in the following sense.

\begin{definition}%\label{ground-state}
Let $r>0$ be arbitrary, we say that $u \in S_r$ is a ground state if 
$$E'|_{S_r}(u) =0 \quad \mbox{  and  }\quad
E(u) = \inf \{E(v) \ s.t. \ v \in S_r, E'|_{S_r}(v) =0\}.$$
\end{definition}

We have
\begin{theoreme}\label{ground-state}
Let $1+\frac 43<p<5$. Then for any fixed $\chi>0$ and sufficiently small $r>0$, the local minimizers $u\in \mathcal{M}_r^{\chi}$ are ground states.
\end{theoreme}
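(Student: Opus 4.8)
The plan is to show that every critical point $v$ of $E$ restricted to $S_r$ satisfies $E(v)\geq J_r^\chi=E(u)$ for $u\in\mathcal{M}_r^\chi$; since by Theorem \ref{complex} each such $u$ is itself a critical point of $E|_{S_r}$, this identifies $\mathcal{M}_r^\chi$ with the minimizers of $E$ among constrained critical points, i.e. with ground states. The critical points split naturally into two families. If $v\in S_r\cap B_\chi$, then $E(v)\geq\inf_{S_r\cap B_\chi}E=J_r^\chi$ directly from the definition \eqref{Jrchi}, with no further work. The whole difficulty is therefore to rule out constrained critical points lying \emph{outside} $B_\chi$ having energy below $J_r^\chi$.

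The main tool is a Pohozaev-type identity obtained from the $L^2$-preserving dilation $v_\mu(x)=\mu^{3/2}v(\mu x)$, which keeps $v_\mu\in S_r$ for every $\mu>0$. Writing $A=\int_{\R^3}|\nabla v|^2\,dx$, $B=\int_{\R^3}(x_1^2+x_2^2)|v|^2\,dx$ and $C=\int_{\R^3}|v|^{p+1}\,dx$, one computes $E(v_\mu)=\tfrac{\mu^2}{2}A+\tfrac{1}{2\mu^2}B-\tfrac{\mu^{3(p-1)/2}}{p+1}C$. Since $v$ is a critical point on $S_r$, the curve $\mu\mapsto v_\mu$ is stationary at $\mu=1$, giving $A-B-\tfrac{3(p-1)}{2(p+1)}C=0$; in particular $A\geq B\geq 0$. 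Substituting $C=\tfrac{2(p+1)}{3(p-1)}(A-B)$ into $E(v)$ and setting $\alpha=\tfrac{2}{3(p-1)}$ yields $E(v)=(\tfrac12-\alpha)A+(\tfrac12+\alpha)B$. The hypothesis $p>1+4/3$ is exactly what forces $\alpha<1/2$, so both coefficients are positive; since $\tfrac12+\alpha>\tfrac12-\alpha$ and $B\geq 0$, replacing the coefficient of the potential term by the smaller one gives the coercive estimate $E(v)\geq(\tfrac12-\alpha)(A+B)=(\tfrac12-\alpha)\|v\|_{\dot H}^2$. Hence any constrained critical point $v$ outside $B_\chi$ satisfies $E(v)>(\tfrac12-\alpha)\chi^2$.

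It remains to compare this threshold with $J_r^\chi$. By Theorem \ref{th:main}(\ref{th:mainPoint2}) we have $u\in B_{\chi r}$, whence $\|u\|_{\dot H}^2\leq\chi^2 r^2$, and discarding the nonnegative nonlinear term gives $J_r^\chi=E(u)\leq\tfrac12\|u\|_{\dot H}^2\leq\tfrac12\chi^2 r^2$. Thus as soon as $r^2<1-2\alpha=1-\tfrac{4}{3(p-1)}$, a positive quantity precisely because $p>1+4/3$, we obtain $J_r^\chi\leq\tfrac12\chi^2 r^2<(\tfrac12-\alpha)\chi^2<E(v)$ for every constrained critical point outside $B_\chi$. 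Combined with the trivial bound inside $B_\chi$, this yields $E(u)=\min\{E(v): v\in S_r,\ E'|_{S_r}(v)=0\}$, so $u$ is a ground state.

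The only genuinely delicate point is the justification of the dilation identity, namely that $v$ is regular enough for $\mu\mapsto E(v_\mu)$ to be differentiable and stationary at $\mu=1$; this follows from the elliptic regularity already available for solutions of the Euler--Lagrange equation in Theorem \ref{complex}. Everything else reduces to the scale comparison of $E(u)=O(r^2)$ against the fixed threshold $(\tfrac12-\alpha)\chi^2$, which is where the smallness of $r$ and the strict inequality $p>1+4/3$ enter decisively.
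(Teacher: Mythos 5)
Your proposal is correct and follows essentially the same route as the paper: the identity $A-B-\frac{3(p-1)}{2(p+1)}C=0$ is exactly the paper's Pohozaev-type constraint $P(v)=0$, and your computation $E(v)=(\tfrac12-\alpha)A+(\tfrac12+\alpha)B\geq\frac{3p-7}{6(p-1)}\|v\|_{\dot H}^2$ reproduces the paper's key coercivity bound, combined with the same comparison $J_r^\chi=O(r^2)$. The only difference is organizational --- you split critical points inside/outside $B_\chi$ with an explicit threshold $r^2<\frac{3p-7}{3(p-1)}$, whereas the paper runs the contrapositive as a contradiction argument forcing a low-energy critical point into $B_\chi$ --- and your flagged regularity caveat for the dilation identity is handled in the paper by citation to the same standard references.
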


\begin{rem}\label{rem:extension}
It is worth noticing that our main results can be easily generalized. 
First of all instead of the confinement $(x_1^2+x_2^2)$, one can consider any potential
$V\big (\sqrt{x_1^2+ x_2^2}\big )$ where $V:(0, \infty)\rightarrow \R$ 
is stricly increasing and unbounded.
Moreover by a direct adaptation of our proofs we can extend our results as to cover the general situation where 
$$i\partial_t u +\Delta u -(x_1^2+...+x_d^2) u = u|u|^{p-1}, \quad
(t, x_1,...,x_d, x_{d+1},...,x_{n})\in \R \times \R^{d}\times \R^{n-d}.$$
Actually assuming that
$$ 1+4/n<p<\min\{1+4/(n-d),1+4/(n-2)\},$$
one can construct orbitally stable standing waves.
In addition these standing waves enjoy the symmetry 
$$w(x_1,..., x_n)=\tilde{w}(x_1,...,x_d, x_{d+1} -k_{d+1},....,x_n-k_n),$$
for some $(k_{d+1},....,k_n) \in \R^{n-d}$ and $\tilde{w}$ is radially symmetry and decreasing  w.r.t. to the first $d$ variables and the least $n-d$ variables separately.
\end{rem}

\begin{rem}\label{rem:open}
We end this introduction by mentioning an open problem related to our results. The solutions found in Theorem \ref{th:main} appear as local minimizers of $E$ on $S_r$ and we recall that $E$ is unbounded from below on $S_r$. This indicates that $E$ has a so-called mountain-pass geometry on $S_r$. Namely it holds that
$$\gamma(c) := \inf_{g \in \Gamma_c}\max_{t \in [0,1]}E(g(t)) >  J_r^{\chi}$$
where
$$\Gamma_c = \big \{g \in C([0,1],S_r) \mbox{ s.t. } g(0) \in {\mathcal M}_r^\chi, g(1) \in S_r \backslash B_{\chi}, E(g(1)) < J_r^{\chi}\big \}.$$
An interesting, but we suspect difficult, question would be to prove the existence of a critical point for $E$ at the level $\gamma(c)$.  In that direction we refer to \cite{BeJe,BeJeLu,Je} where a constrained critical point is obtained by a minimax procedure set on a constraint. If this is true this would establish the existence of at least two critical points on $S_r$ for $r>0$ small. 
\end{rem}

{\bf Notation} In the sequel we shall use, without any further comment, the following notations
as well as the ones introduced along the presentation above:
$$\int f dx=\int_{\R^3} f(x_1, x_2, x_3) dx_1dx_2dx_3,
\int g dx_1 dx_2=\int_{\R^2} g(x_1, x_2) dx_1dx_2,$$$$
\int h dx_3=\int_{\R} h(x_3) dx_3.$$
We shall also denote by $\nabla_x$ the full gradient w.r.t. $x_1, x_2, x_3$,
by $\nabla_{x_1, x_2}$ the partial gradient w.r.t. $x_1, x_2$.\\

{\bf Acknowlegments} The authors thanks R. Carles, A. Farina, J. Van Schaftingen, C.A. Stuart for very useful discussions and suggestions. In particular we thank C.A. Stuart \cite{St} for providing us with elements to simplify the original proof of Theorem \ref{nab}.
We are also indebted to the referees whose comments helped to significantly improve our manuscript.

\section{Spectral theory}

For the sake of completeness, we provide a proof of the lemma below even though it is a classical statement.  Moreover, the proof will be useful for the last assertion of Theorem~\ref{complex}. The aim is to compare the quantities
associated with two spectral problems defined respectively in $3d$ and in $2d$:
$$\Lambda_0=\inf_{\int |w|^2 dx=1} 
\big( \int |\nabla_x w|^2 dx + \int (x_1^2+ x_2^2) |w|^2 dx\big)
$$
and
$$\lambda_0=\inf_{\int |\psi|^2 dx_1 dx_2=1} 
\big (\int |\nabla_{x_1,x_2} \psi|^2 dx_1 dx_2 + \int (x_1^2 + x_2^2) 
|\psi|^2 dx_1dx_2\big ).$$
Note that the spectrum of the $1$-dimensional quantum harmonic oscillator $-\partial_{x_1}^2+x_1^2$ is given by the odd integers (all simple) and the corresponding eigenspaces are generated by Hermite functions. In the $2$-dimensional case, this provides that $\lambda_0=2$, it is simple and a corresponding minimizer is given by the gaussian function $e^{-(x_1^2+x_2^2)}$.
\begin{lemme}\label{lem:spect}
We have the following equality:
$$\lambda_0=\Lambda_0.$$
\end{lemme}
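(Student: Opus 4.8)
The plan is to prove the two infima coincide by showing each is bounded by the other, exploiting that the $3d$ operator differs from the $2d$ operator only by the addition of the free Laplacian $-\partial_{x_3}^2$, which is a nonnegative operator contributing nothing to the infimum in the limit.

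\medskip

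\emph{Step 1: The inequality $\Lambda_0 \geq \lambda_0$.} First I would observe that for any normalized $w \in H$ with $\int |w|^2\,dx = 1$, the quadratic form splits as
\[
\int |\nabla_x w|^2\,dx + \int (x_1^2+x_2^2)|w|^2\,dx
= \int |\partial_{x_3} w|^2\,dx + \int \big(|\nabla_{x_1,x_2} w|^2 + (x_1^2+x_2^2)|w|^2\big)\,dx.
\]
Since $\int |\partial_{x_3} w|^2\,dx \geq 0$, the right-hand side dominates the $2d$ form. To extract $\lambda_0$ I would use a slicing argument: for almost every fixed $x_3$, the function $w(\cdot,\cdot,x_3)$ lies in the $2d$ form domain, so
\[
\int \big(|\nabla_{x_1,x_2} w|^2 + (x_1^2+x_2^2)|w|^2\big)\,dx_1 dx_2 \geq \lambda_0 \int |w|^2\,dx_1 dx_2,
\]
by the variational definition of $\lambda_0$. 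Integrating this in $x_3$ and using $\int |w|^2\,dx = 1$ gives the $3d$ form is at least $\lambda_0$, hence $\Lambda_0 \geq \lambda_0$.

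\medskip

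\emph{Step 2: The inequality $\Lambda_0 \leq \lambda_0$.} Here I would construct near-optimal test functions for the $3d$ problem out of the $2d$ minimizer. Let $\Psi_0(x_1,x_2)$ be the normalized $2d$ minimizer (the Gaussian, so that the $2d$ form equals $\lambda_0$), and let $\varphi_\varepsilon(x_3)$ be a normalized $L^2(\R)$ function spread out over scale $1/\varepsilon$, for instance $\varphi_\varepsilon(x_3) = \sqrt{\varepsilon}\,\varphi(\varepsilon x_3)$ with $\varphi \in C_0^\infty(\R)$ normalized in $L^2$. Then $w_\varepsilon := \varphi_\varepsilon \otimes \Psi_0$ is normalized in $L^2(\R^3)$ and lies in $H$, and a direct computation gives
\[
\int |\nabla_x w_\varepsilon|^2\,dx + \int (x_1^2+x_2^2)|w_\varepsilon|^2\,dx
= \lambda_0 + \varepsilon^2 \int |\varphi'|^2\,dx_3.
\]
Letting $\varepsilon \to 0$, the extra term vanishes, so $\Lambda_0 \leq \lambda_0$. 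Combining the two steps yields the claimed equality.

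\medskip

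The main obstacle, such as it is, lies in Step 1: one must justify the slicing/Fubini argument rigorously, since $w$ is only in a Sobolev-type space and the restriction to slices $w(\cdot,\cdot,x_3)$ must be controlled for almost every $x_3$. This is routine but requires care that the $2d$ form is finite for a.e.\ slice and measurable in $x_3$; a density argument reducing to $w \in C_0^\infty(\R^3)$ (or Schwartz functions adapted to the harmonic oscillator) handles this cleanly, after which the inequality passes to the closure by lower semicontinuity of the form. The essential structural point throughout is that the $x_3$-direction carries no potential, so its kinetic contribution can be made arbitrarily small while preserving the $L^2$ normalization, which is exactly why the infima agree even though $\Lambda_0$ fails to be an attained eigenvalue of the $3d$ operator.
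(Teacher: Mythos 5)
Your proof is correct, but your Step 1 takes a genuinely different route from the paper's. Where you slice along hyperplanes $\{x_3=\mathrm{const}\}$ and apply the variational definition of $\lambda_0$ to almost every slice $w(\cdot,\cdot,x_3)$ before integrating in $x_3$, the paper instead expands $w$ in the Hilbert basis $(\Psi_j)_{j\geq 0}$ of $L^2(\R^2)$ of eigenfunctions of the $2d$ oscillator, writing $w=\sum_j \Psi_j(x_1,x_2)\varphi_j(x_3)$ and reading off
\[
\int |\nabla_x w|^2\,dx + \int (x_1^2+x_2^2)|w|^2\,dx \;=\; \sum_{j\geq 0}\lambda_j\int|\varphi_j|^2\,dx_3 + \sum_{j\geq 0}\int|\partial_{x_3}\varphi_j|^2\,dx_3 \;\geq\; \lambda_0.
\]
Your slicing argument is more elementary: it uses only the variational characterization of $\lambda_0$, not the discreteness of the $2d$ spectrum, so it would extend verbatim to a general confining potential $V(x_1,x_2)$ whose form infimum need not be attained, in the spirit of Remark \ref{rem:extension}. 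The price is the Fubini/restriction step you flag yourself --- that a.e.\ slice lies in the $2d$ form domain with the sliced form measurable in $x_3$ --- which your density-plus-lower-semicontinuity remark handles adequately. What the paper's basis expansion buys is reuse: the same mode decomposition, together with the quantitative gap $\lambda_j-\lambda_0>0$ for $j\geq 1$, is exactly the mechanism behind the asymptotic profile \eqref{eq:profile} in Theorem \ref{complex} (estimates \eqref{eq:energyestimate}--\eqref{eq:coordinateestimate}), which is why the paper announces that its proof of the lemma ``will be useful for the last assertion of Theorem~\ref{complex}.'' Your Step 2 coincides with the paper's: both test with $\Psi_0\otimes\varphi$, and your explicit scaling $\varphi_\varepsilon(x_3)=\sqrt{\varepsilon}\,\varphi(\varepsilon x_3)$ is one concrete realization of the paper's sequence $\varphi_n$ with $\int|\partial_{x_3}\varphi_n|^2\,dx_3\to 0$.
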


\begin{proof}
We introduce $\Psi_j(x_1, x_2)$ and $\lambda_j$ for $j\geq 0$ such that
$$-\Delta_{x_1, x_2} \Psi_j + (x_1^2 + x_2^2)\Psi_j=\lambda_j \Psi_j, \int |\Psi_j |^2 dx_1 dx_2=1,$$$$ \lambda_j\leq \lambda_{j+1}, j=0,1,2,..... $$
It is well-known that $(\Psi_j)$ is a Hilbert basis for $L^2(\R^2)$. Using Fourier decomposition w.r.t. this basis in the first two variables 
$(x_1, x_2)$ 
we get 
$$w(x)=\sum_{j\geq 1} \Psi_j(x_1, x_2) \varphi_j(x_3).$$
Notice also that if $\|w\|_2=1$ then
$$1=\sum_{j\geq 0} 
(\int |\Psi_j|^2 dx_1 dx_2)(\int |\varphi_j|^2 dx_3)=
\sum_{j\geq 0} \int |\varphi_j|^2 dx_3.$$
Moreover we have
$$ \int |\nabla_x w|^2 dx + \int (x_1^2+ x_2^2) |w|^2 dx
= \int (-\Delta_{x_1, x_2} w + (x_1^2+ x_2^2) w) \bar w dx + \int |\partial_{x_3} w|^2 dx$$
$$=
\sum_{j\geq 0} \lambda_j \int |\varphi_j|^2 dx_3
+ \sum_{j\geq 0} \int |\partial_{x_3} \varphi_j |^2 dx_3
\geq \lambda_0$$
and hence
\[
 \Lambda_0\geq \lambda_0.
\]
To prove the other inequality, we 
choose $$w_n(x)=\Psi_0(x_1,x_2)\varphi_n(x_3), 
\int |\varphi_n|^2 dx_3 =1$$
and notice that arguing as above we get
$$ \int |\nabla_x w_n|^2 dx + \int (x_1^2+ x_2^2) |w_n|^2 dx=
\lambda_0 \int |\varphi_n|^2 dx_3 + \int |\partial_{x_3} \varphi_n |^2 dx_3$$
$$=\lambda_0 + \int |\partial_{x_3} \varphi_n |^2 dx_3.
$$
We conclude by choosing properly $\varphi_n$ such that
$\lim_{n\rightarrow \infty}\int |\partial_{x_3} \varphi_n |^2 dx_3=0$. Notice that this choice can be done
since 
\[ 
\inf_{\int |\varphi|^2 dx_3=1} \int |\partial_{x_3} \varphi|^2 dx_3 =0.\qedhere
\]
\end{proof}

\section{Proof of Theorem \ref{th:main}}

The proof is split in several steps. The main point is to obtain the following compactness. For all $r<r_0$, for any minimizing sequence $(u_n) \subset S_r\cap B_\chi$ 
such that  $\lim_{n\rightarrow \infty} E(u_n)=J_r^\chi$ there exists $(k_n) \subset \R$ such that  
$(u_n(x_1, x_2, x_3 - k_n))$ is compact in $H$. 

Notice as well that Lemma~\ref{lem:localmin} below implies the second point of Theorem~\ref{th:main}.

\subsection{Local Minima Structure for \texorpdfstring{$E(u)$}{E(u)} on \texorpdfstring{$S_r$}{S_r}}

\begin{lemme}\label{lem:localmin} Let $1+4/3\leq p<5$ be fixed, then 
for every $\chi>0$ there exists $r_0=r_0(\chi)>0$ such that:
\begin{equation}\label{eq:1}
S_r\cap B_{\chi }\neq \emptyset, \quad \forall r<r_0;
\end{equation}
\begin{equation}\label{eq:2}
%\forall \chi>\chi_0 \quad \exists 
%r_0=r_0(\chi)>0 \hbox{ s. t.  }
\inf_{S_r \cap B_{\chi r/2}} E(u) <\inf_{S_r \cap (B_{\chi}\setminus B_{\chi r}) }E(u),
\quad \forall r<r_0.
\end{equation}
\end{lemme}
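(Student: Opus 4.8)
The plan is to treat the two assertions separately, leaning on three ingredients already at hand: the elementary bound $2\int |u|^2\,dx \le \|u\|_{\dot H}^2$ established in the introduction, the spectral lower bound $\|u\|_{\dot H}^2 \ge \Lambda_0\,\|u\|_{L^2}^2 = 2\,\|u\|_{L^2}^2$ furnished by Lemma \ref{lem:spect}, and the Gagliardo--Nirenberg inequality in $\R^3$, which in the form tailored to $E$ reads
\begin{equation*}
\int |u|^{p+1}\,dx \le C\,\|\nabla_x u\|_{L^2}^{\frac{3(p-1)}{2}}\,\|u\|_{L^2}^{\frac{5-p}{2}} \le C\,\|u\|_{\dot H}^{\frac{3(p-1)}{2}}\,\|u\|_{L^2}^{\frac{5-p}{2}}.
\end{equation*}
Throughout I would keep $\|u\|_{L^2}=r$ fixed and parametrise by $s=\|u\|_{\dot H}$.

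For \eqref{eq:1} (and, more importantly, to exhibit a point of $S_r\cap B_{\chi r/2}$, which is needed below) I would use the tensor test function $u=\Psi_0(x_1,x_2)\,\varphi_\sigma(x_3)$, where $\Psi_0$ is the normalised ground state of $-\Delta_{x_1,x_2}+x_1^2+x_2^2$ and $\varphi_\sigma(x_3)=\sigma^{-1/2}\phi(x_3/\sigma)$ with $\phi=r\phi_0$, $\|\phi_0\|_{L^2}=1$. Exactly as in the proof of Lemma \ref{lem:spect} one finds $\|u\|_{L^2}=r$ and, using $\lambda_0=\Lambda_0=2$,
\begin{equation*}
\|u\|_{\dot H}^2 = 2r^2 + \sigma^{-2}\|\phi'\|_{L^2}^2 = r^2\bigl(2+\sigma^{-2}\|\phi_0'\|_{L^2}^2\bigr).
\end{equation*}
Letting $\sigma\to\infty$ pushes the bracket arbitrarily close to $2$, so $u\in S_r$ has $\|u\|_{\dot H}^2$ as close to $2r^2$ as desired; this gives \eqref{eq:1} for $r<r_0\le\chi/\sqrt2$, and membership in $B_{\chi r/2}$ in the relevant range of $\chi$.

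For \eqref{eq:2} I would bound each side on its natural scale. The same test function gives the upper bound
\begin{equation*}
\inf_{S_r\cap B_{\chi r/2}} E \le \tfrac12\|u\|_{\dot H}^2 \le r^2\bigl(1+\tfrac12\sigma^{-2}\|\phi_0'\|_{L^2}^2\bigr)=:r^2(1+\eta),
\end{equation*}
with $\eta$ small and independent of $r$. For the annulus, Gagliardo--Nirenberg yields, for every $u\in S_r\cap(B_\chi\setminus B_{\chi r})$ with $s=\|u\|_{\dot H}\in(\chi r,\chi]$,
\begin{equation*}
E(u) \ge \tfrac12 s^2 - C\,r^{\frac{5-p}{2}}\,s^{\frac{3(p-1)}{2}} =: h(s).
\end{equation*}
Since $p\ge 1+4/3$ forces $\frac{3(p-1)}{2}\ge 2$, a short computation shows that the critical point of $h$ exceeds $\chi$ once $r$ is small, so $h$ is increasing on $[\chi r,\chi]$ and is minimised at the inner edge $s=\chi r$. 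Evaluating there and using $\frac{5-p}{2}+\frac{3(p-1)}{2}=p+1$ gives
\begin{equation*}
\inf_{S_r\cap(B_\chi\setminus B_{\chi r})} E \ge \tfrac12\chi^2 r^2 - C\chi^{\frac{3(p-1)}{2}} r^{p+1} = \tfrac12\chi^2 r^2\bigl(1-o(1)\bigr),\qquad r\to 0,
\end{equation*}
and \eqref{eq:2} follows as soon as $\tfrac12\chi^2(1-o(1))>1+\eta$, i.e. for all $r$ below a threshold $r_0=r_0(\chi)$.

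The main obstacle is that \eqref{eq:2} demands control of $E$ uniformly over the \emph{whole} annulus, not merely at a single radius; this is what forces the analysis of $h(s)$ and the verification that its minimum sits at $s=\chi r$. The decisive structural fact is the separation of scales: on all the sets in play the nonlinear term scales like $r^{p+1}$, strictly higher than the quadratic $r^2$ coming from $\tfrac12\|u\|_{\dot H}^2$. This is precisely where the threshold $p\ge 1+4/3$ enters, guaranteeing $\frac{3(p-1)}{2}\ge 2$ and hence that the nonlinearity is negligible at small mass. Once this is in hand the comparison collapses to the transparent inequality $\tfrac12\chi^2 r^2>r^2$, and the fact that the quadratic energy bottoms out exactly at the spectral value $\Lambda_0=2$ does the rest.
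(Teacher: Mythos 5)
Your analysis of the annulus is the same as the paper's: insert the Gagliardo--Nirenberg inequality \eqref{GN} to get $E(u)\geq h(s)=\tfrac12 s^2-Cr^{\frac{5-p}{2}}s^{\frac{3(p-1)}{2}}$ with $s=\|u\|_{\dot H}\in(\chi r,\chi]$, and check that $h$ is minimised at the inner edge for $r$ small (the paper does this slightly differently, noting $h(s)=\tfrac12 s^2(1-Cr^{\frac{5-p}{2}}s^{\frac{3p-7}{2}})\geq\tfrac38 s^2$ on $(0,\chi)$ for $r<r_0(\chi)$, which avoids discussing critical points; note also that at the endpoint $p=1+4/3$ your $h$ has \emph{no} positive critical point, though your monotonicity conclusion survives). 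The genuine divergence, and the one gap to flag, is on the left-hand side of \eqref{eq:2}. The paper bounds $E(u)\leq\tfrac12\|u\|_{\dot H}^2\leq\tfrac18\chi^2r^2$ for $u\in S_r\cap B_{\chi r/2}$, so \emph{both} sides of the comparison scale like $\chi^2r^2$ and the inequality $\tfrac18\chi^2r^2<\tfrac38\chi^2r^2$ closes uniformly for every $\chi>0$. You instead anchor the upper bound at the spectral floor via the tensor functions $\Psi_0\varphi_\sigma$ with $\|u\|_{\dot H}^2\approx\Lambda_0 r^2=2r^2$, obtaining $\inf E\leq r^2(1+\eta)$ \emph{independent of} $\chi$; your closing comparison $\tfrac12\chi^2(1-o(1))>1+\eta$ then needs $\chi^2>2$, and membership of your test function in $B_{\chi r/2}$ needs $\chi^2\geq 8(1+\eta)$. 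As written your proof therefore establishes \eqref{eq:2} only for $\chi$ above an absolute threshold (roughly $\chi>2\sqrt2$), whereas the lemma is asserted for every $\chi>0$.

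In fairness, your spectral viewpoint exposes a point the paper glosses over: since $\|u\|_{\dot H}^2\geq\Lambda_0\|u\|_{L^2}^2=2r^2$ on $S_r$, with $\Lambda_0$ not attained, the set $S_r\cap B_{\chi r/2}$ is in fact \emph{empty} whenever $\chi\leq 2\sqrt2$, so the left infimum is $+\infty$ and \eqref{eq:2} can only be read for larger $\chi$; the paper's own deduction from $g_r(\chi r/2)<\inf_{s\in(\chi r,\chi)}f_r(s)$ to the inequality between the two set-infima tacitly uses nonemptiness of $S_r\cap B_{\chi r/2}$ at exactly the same point. So your restriction on $\chi$ is intrinsic to the statement rather than a defect of your method, but you should state it explicitly instead of hiding it in ``the relevant range of $\chi$''. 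Two further remarks: for \eqref{eq:1} the paper simply rescales a fixed $u_0$ with $\|u_0\|_{\dot H}=\chi$, which is lighter than your construction and needs no spectral input; and your tensor computation essentially reproduces the proof of Lemma \ref{lem:comparison}, where the paper deploys precisely these test functions --- keeping the present lemma purely scaling-based is what lets its proof remain $\chi$-homogeneous.
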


\begin{proof}Let $u_0 \in H$ with $||u_0||_{\dot H} = \chi$ and let $r_0:=||u_0||_{L^2}$. Then the fact that $S_r \cap B_{\chi} \neq \emptyset$ for any $0 < r \leq r_0$ follows by considering $u_r = \frac{r}{r_0}u_0.$
%Notice that $\chi_0>0$ cannot be arbitrarily small because of the following inequality:
To prove \eqref{eq:2} we borrow some arguments from \cite{Je}.
Notice that we have the following Gagliardo-Sobolev inequality
\begin{equation}\label{GN}
\|u\|_{L^{p+1}}^{p+1} \leq C\|u\|_{L^2}^{\frac{5-p}{2}} \|u\|_{\dot H}^{\frac{3p-3}{2}},
\end{equation}
(see for example \cite{Ca}) and hence
$$\begin{cases}E(u)\geq \frac 12 \|u\|_{\dot H}^2  - C r^{\frac{5-p}{2}}
\|u\|_{\dot H}^{\frac{3p-3}{2}}=f_r(\|u\|_{\dot H}), \quad \forall u\in S_r
\\
E(u)\leq \frac 12 \|u\|_{\dot H}^2 =g_r(\|u\|_{\dot H})), \quad \forall u\in S_r
\end{cases}$$
where
$$\begin{cases}f_r(s)=\frac 12 s^2 - C r^\epsilon s^{2+\delta}\\ 
g_r(s)= \frac 12 s^2 \end{cases}$$
and $$\epsilon=\epsilon(p)=\frac{5-p}{2}>0 , \quad \delta=\delta(p)=\frac{3p-7}{2}\geq 0.$$
Notice that it is sufficient to prove
the existence of $r_0=r_0(\chi)>0$ such that
$$g_r(\chi r/2) < \inf_{s\in (\chi r, \chi)} f_r(s), \quad \forall r<r_0(\chi).$$
In fact this inequality implies:
$$\inf_{S_r\cap B_{\chi r/2}} E(u)< \inf_{S_r\cap (B_{\chi}\setminus
B_{\chi r}) } E(u).$$
Notice that $f_r(s)=\frac 12 s^2 ( 1- C r^\epsilon s^{\delta})>\frac 38 s^2 $
for $s\in (0,\chi)$ and 
for $r<r_0(\chi)<<1$, and hence
$$\inf_{s\in (\chi r, \chi)} f_r(s)\geq \frac 38 \chi^2 r^2.$$
We conclude since
\[
g_r(\chi r/2)= \frac 18 \chi^2 r^2. \qedhere
\]
\end{proof}

\subsection{No Vanishing for Minimizing Sequences}

\begin{lemme}\label{lem:comparison}
Let $1 < p<5$ be fixed, then for every $\chi>0$ there exists 
$r_0=r_0(\chi)>0$ such that
$$\inf_{S_r \cap B_{\chi}} E(u)<r^2\frac{\Lambda_0}2, \quad \forall r<r_0.$$
%where
%\begin{equation}\label{J}\Lambda_0=\inf_{\int |w|^2 dx=1} 
%\int |\nabla_x w|^2 dx + \int (x_1^2+ x_2^2) |w|^2 dx.
%\end{equation}
\end{lemme}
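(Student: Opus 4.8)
The plan is to exhibit, for each small $r$, an explicit test function in $S_r \cap B_\chi$ whose energy lies strictly below $r^2\Lambda_0/2$. The natural candidate is a separated-variables function built from the $2d$ ground state, namely $u(x) = \Psi_0(x_1,x_2)\varphi(x_3)$, where $\Psi_0$ is the normalized minimizer associated with $\lambda_0 = \Lambda_0$ from Lemma~\ref{lem:spect} and $\varphi$ is a profile in the unconfined variable, to be chosen. Repeating the separation-of-variables computation carried out in the proof of Lemma~\ref{lem:spect} (there applied to $w_n = \Psi_0\varphi_n$), one gets $\|u\|_{L^2}^2 = \|\varphi\|_{L^2}^2$ and $\|u\|_{\dot H}^2 = \Lambda_0\|\varphi\|_{L^2}^2 + \|\partial_{x_3}\varphi\|_{L^2}^2$, while $\int|u|^{p+1}\,dx = C_0\int|\varphi|^{p+1}\,dx_3$ with $C_0 = \int|\Psi_0|^{p+1}\,dx_1dx_2 > 0$. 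Imposing $\|\varphi\|_{L^2}^2 = r^2$ places $u$ on $S_r$ and yields
$$E(u) = \frac{\Lambda_0 r^2}{2} + \frac12\int|\partial_{x_3}\varphi|^2\,dx_3 - \frac{C_0}{p+1}\int|\varphi|^{p+1}\,dx_3.$$
Thus the lemma reduces to producing $\varphi$ with $\|\varphi\|_{L^2}^2 = r^2$ whose one-dimensional focusing energy is negative, i.e. whose kinetic term is dominated by the nonlinear term, while keeping $\|\partial_{x_3}\varphi\|_{L^2}$ small enough that $u \in B_\chi$.

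To do so I would fix a bump $\phi\in C^\infty_0(\R)$ with $\|\phi\|_{L^2} = 1$ and use the two-parameter family $\varphi(x_3) = A\,\phi(x_3/L)$, subject to $A^2L = r^2$ so that $\|\varphi\|_{L^2}^2 = r^2$. Then $\|\partial_{x_3}\varphi\|_{L^2}^2 = r^2L^{-2}\|\phi'\|_{L^2}^2$ and $\int|\varphi|^{p+1}\,dx_3 = r^{p+1}L^{(1-p)/2}\|\phi\|_{L^{p+1}}^{p+1}$, so the required negativity amounts to
$$\frac{r^2}{2}L^{-2}\|\phi'\|_{L^2}^2 < \frac{C_0}{p+1}\,r^{p+1}L^{(1-p)/2}\|\phi\|_{L^{p+1}}^{p+1}.$$
Choosing the dilation $L = r^{-\alpha}$ with any exponent $\alpha > \tfrac{2(p-1)}{5-p}$ (positive since $1 < p < 5$), a comparison of the powers of $r$ on the two sides shows that the right-hand side overwhelms the left-hand side as $r \to 0$, so the inequality holds for all $r < r_0$. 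This is exactly where the subcriticality $p < 5$ in the single unconfined dimension enters: it forces $(5-p)/2 > 0$, so the spreading scale $L \to \infty$ makes the nonlinear term dominate. This is the quantitative counterpart of the fact, used in Lemma~\ref{lem:spect}, that $\inf_{\|\varphi\|_{L^2}=1}\|\partial_{x_3}\varphi\|_{L^2}^2 = 0$.

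It remains to verify the membership $u \in B_\chi$. From the formula above, $\|u\|_{\dot H}^2 = \Lambda_0 r^2 + r^{2+2\alpha}\|\phi'\|_{L^2}^2$, and since $2 + 2\alpha > 0$ both terms vanish as $r \to 0$; hence $\|u\|_{\dot H}^2 < \chi^2$ for $r$ small, giving $u \in S_r \cap B_\chi$. Combining these facts yields $E(u) < r^2\Lambda_0/2$ for $r < r_0$, which is the claim. The only delicate point is the simultaneous balancing: the profile must spread (large $L$) to exploit the focusing nonlinearity, yet this spreading must not inflate $\|u\|_{\dot H}$ beyond $\chi$. I expect this compatibility to be the crux, and it is met for small $r$ precisely in the admissible range $1 < p < 5$.
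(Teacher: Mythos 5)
Your proof is correct and takes essentially the same route as the paper: the same separated test function $\Psi_0(x_1,x_2)\varphi(x_3)$ with the same mass-preserving dilation in $x_3$ (your $\varphi = A\,\phi(\cdot/L)$ with $A^2L=r^2$ and $L=r^{-\alpha}$ is exactly the paper's $\psi_\mu(x_3)=\sqrt{\mu}\,\psi(\mu x_3)$ with $\mu=1/L$), using $p<5$ to make the spread-out nonlinear term dominate the one-dimensional kinetic term while keeping $\|u\|_{\dot H}\leq \chi$. The only cosmetic difference is that you fix the dilation as an explicit power of $r$, whereas the paper simply chooses the scaling parameter $\mu$ small enough for each given $r$.
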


\begin{proof}
%We fix $v\in S(r)\cap B_H(k_0)$
%and we introduce $v_\lambda=\sqrt \lambda v(x_1, x_2, \lambda x_3)$
%then we have $v_\lambda\in S(r)$ and also
%$$E(v_\lambda)= \frac 12 \lambda^2 \int |\partial_{x_3} v|^2 dx
%+\frac 12 \int |\nabla_{x_1, x_2} v|^2 dx
%+\frac 12 \int (x_1^2+x_2^2) |v|^2 dx - \frac 1{p+1} \lambda^{\frac{p-1}2}\int |v|^{p+1}.$$
%We choose $\Psi_0(x_1, x_2)$ such that
%$$-\Delta_{x_1, x_2} \Psi_0 + (x_1^2 + x_2^2)\Psi_0=\lambda_0 \Psi_0, \int |\Psi_0 |^2 dx_1 dx_2=1,$$
%and 
%$$\lambda_0= \inf_{\int |\psi|^2 dx_1 dx_2=1} 
%\Big(\int |\nabla_{x_1, x_2} \psi|^2 dx_1 dx_2 + \int (x_1^2 + x_2^2) 
%|\psi|^2 dx_1 dx_2 \Big).$$
First note that $\Lambda_0=\lambda_0$
by Lemma \ref{lem:spect}.
Now we set
$$w(x)=\Psi_0(x_1, x_2) \varphi(x_3), \quad \int |\varphi|^2 dx_3=r^2$$ 
with $\varphi(x_3)$ to be chosen later.
Notice that
$$E(w)= \frac 12 \int |\partial_{x_3} \varphi|^2 dx_3
+\frac 12 \lambda_0 \int |\varphi|^2 dx_3 - \frac 1{p+1}
(\int |\Psi_0|^{p+1} dx_1 dx_2) (\int |\varphi|^{p+1} dx_3 )$$
$$=\frac 12 \int |\partial_{x_3} \varphi|^2 dx_3
+ \frac{\Lambda_0}2 r^2 - \frac 1{p+1}
(\int |\Psi_0|^{p+1} dx_1 dx_2) (\int |\varphi|^{p+1} dx_3 ).$$
In order to conclude it is sufficient
to choose $\varphi$ such that:
\begin{itemize}
%\item $\int |\varphi|^2 dx_3=r^2$;
\item $\frac 12 \int |\partial_{x_3} \varphi|^2 dx_3
- \frac 1{p+1}
(\int |\Psi_0|^{p+1} dx_1 dx_2) (\int |\varphi|^{p+1} dx_3 )<0.$
\item $\|\Psi_0(x_1, x_2)\varphi(x_3)\|_{\dot H}^2\leq \chi^2$.
\end{itemize}
In order to get the existence of $\varphi(x_3)$ we 
fix $\psi(x_3)$ such that $\int |\psi(x_3)|^2 dx_3=r^2$
and we introduce $\psi_{\mu}(x_3)=\sqrt \mu \psi(\mu x_3)$.
We claim that there exists $\mu_0>0$ such that
$\psi_\mu(x_3)$ satisfies all the conditions above
for every $\mu<\mu_0$.
Concerning the first condition notice that
$$\frac 12 \int |\partial_{x_3} \psi_\mu|^2 dx_3
- \frac 1{p+1}
(\int |\Psi_0|^{p+1} dx_1 dx_2) (\int |\psi_\mu|^{p+1} dx_3 )
$$$$
=\frac 12 \mu^2 \int |\partial_{x_3} \psi|^2 dx_3
- \frac 1{p+1} \mu^{\frac{p-1}2}
(\int |\Psi_0|^{p+1} dx_1 dx_2) (\int |\psi|^{p+1} dx_3 ).$$
In particular, since $1<p<5$, the first condition follows for every $\mu<<1$.
Concerning the second condition notice that
$$\|\Psi_0(x_1, x_2)\psi_\lambda(x_3)\|_{\dot H}^2
= \int |\partial_{x_3} \psi_\mu |^2 dx_3 + \lambda_0 \int |\psi_\mu |^2 dx_3
=\mu^2 \int |\partial_{x_3} \psi |^2 dx_3 +\lambda_0 r^2$$
and hence we conclude by choosing $\mu$ small enough
and $r_0=r_0(\chi)$ such that $2 \lambda_0 r_0^2<\chi^2$.
\end{proof}

\begin{lemme}\label{lem:novanishing}
Let $\chi>0$ and $r_0(\chi)>0$ 
be as in Lemma \ref{lem:comparison}.
Let $r<r_0$  and $(u_n)$ be a sequence such that
$$u_n\in S_r\cap B_{\chi}, \quad 
\lim_{n\rightarrow \infty} E(u_n)=\inf_{u\in S_r\cap B_{\chi}} E(u)=J_r^\chi,$$
then %there exists $\varepsilon_0>0$ such that
\begin{equation}\label{lowerbound}
\liminf_{n\rightarrow \infty} \int |u_n|^{p+1} dx>0.
\end{equation}
\end{lemme}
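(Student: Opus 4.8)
The plan is to argue by contradiction, exploiting the strict spectral gap furnished by Lemma \ref{lem:comparison}. Suppose \eqref{lowerbound} fails, so that $\liminf_{n\to\infty}\int |u_n|^{p+1}\,dx = 0$; passing to a subsequence (not relabelled) I may assume $\int |u_n|^{p+1}\,dx \to 0$.

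The key quantitative input is the variational characterization of $\Lambda_0$ recalled in Section~2. Since
$$\Lambda_0 = \inf_{\int |w|^2\,dx = 1} \|w\|_{\dot H}^2,$$
a homogeneity argument gives $\|w\|_{\dot H}^2 \geq \Lambda_0 \|w\|_{L^2}^2$ for every $w \in H$. Applying this to each $u_n \in S_r$, for which $\|u_n\|_{L^2}^2 = r^2$, yields the lower bound $\|u_n\|_{\dot H}^2 \geq \Lambda_0 r^2$.

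Combining this with the assumed vanishing of the nonlinear term, I would estimate
$$E(u_n) = \frac12 \|u_n\|_{\dot H}^2 - \frac{1}{p+1}\int |u_n|^{p+1}\,dx \geq \frac{\Lambda_0 r^2}{2} - \frac{1}{p+1}\int |u_n|^{p+1}\,dx,$$
and then pass to the limit. Since the last term tends to $0$, this forces $J_r^\chi = \lim_{n\to\infty} E(u_n) \geq \frac{\Lambda_0 r^2}{2}$, which directly contradicts the strict inequality $J_r^\chi < \frac{\Lambda_0}{2}r^2$ established in Lemma \ref{lem:comparison}. This contradiction proves \eqref{lowerbound}.

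As for the difficulty, the substantive work is already contained in Lemma \ref{lem:comparison}, where the test function $\Psi_0(x_1,x_2)\psi_\mu(x_3)$ (with $\psi_\mu$ a spreading rescaling in the unconfined variable $x_3$) pushes the infimum strictly below the spectral threshold $\frac{\Lambda_0}{2}r^2$. Given that gap, the present statement is essentially immediate: the only step needing care is the correct application of the spectral inequality $\|u_n\|_{\dot H}^2 \geq \Lambda_0 r^2$, which is precisely what makes a nontrivial concentration profile unavoidable along any minimizing sequence and thereby rules out vanishing.
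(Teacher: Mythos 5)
Your proof is correct and follows essentially the same route as the paper: both argue by contradiction, use the variational characterization $\|u_n\|_{\dot H}^2 \geq \Lambda_0\|u_n\|_{L^2}^2 = \Lambda_0 r^2$ to conclude $J_r^\chi \geq \tfrac{\Lambda_0}{2}r^2$ once the nonlinear term vanishes, and contradict the strict upper bound of Lemma \ref{lem:comparison}. Your version merely spells out the subsequence extraction and the homogeneity step that the paper leaves implicit.
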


\begin{proof}
%We define
%$$J_r=\inf_{u\in S(r), u\in B_H(k_0)} E(u).$$
Assume by contradiction that
\eqref{lowerbound} is false
then we get:
$$J_r^\chi=\lim_{n\rightarrow \infty} 
\frac 12\int |\nabla_x u_n|^2 dx + \frac 12 \int (x_1^2+ x_2^2) |u_n|^2 dx
\geq r^2 \frac{\Lambda_0}2.
$$
Notice that it gives a contradiction with Lemma \ref{lem:comparison}.
\end{proof}

We can now prove the non-vanishing of the minimizing sequences up to translation w.r.t. to the direction $x_3$. This follows from the next lemma, the proof of which is classical  
(see for instance \cite{BV}
where a similar argument is used to prove the existence of vortex),
however we keep it in order to be self-contained.

\begin{lemme}\label{lem:novanish}
Assume that $\sup (\|u_n\|_{L^2} + \|u_n\|_{\dot H})<\infty$ 
and there exists $\varepsilon_0>0$ such that
$$\|u_n\|_{L^{p+1}}>\varepsilon_0, \quad \forall n$$
then
for a sequence $(z_n)\in \R$ we have
$$u_n(x_1, x_2, x_3 - z_n) \rightharpoonup\bar u \neq 0, \quad \hbox{ in } H.$$ 
\end{lemme}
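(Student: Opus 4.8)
\emph{The plan is to} recognize this as the standard non-vanishing (first) alternative of the concentration--compactness principle, adapted to the present geometry in which the only source of non-compactness is translation invariance in the variable $x_3$; the harmonic confinement in $(x_1,x_2)$ forbids any escape of mass in those directions. Accordingly I would translate only in $x_3$, and it suffices to produce a sequence $(z_n)\subset\R$ together with a constant $c_0>0$ such that
\[
\int_{\R^2\times(z_n,z_n+1)}|u_n|^2\,dx\ge c_0\qquad\text{for all }n .
\]
Once such slabs are found, the weak limit will automatically be nonzero, by the local compactness argument below.

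The heart of the matter is to rule out vanishing, i.e. to show
\[
\liminf_{n\to\infty}\ \sup_{z\in\R}\int_{\R^2\times(z,z+1)}|u_n|^2\,dx>0 .
\]
I would argue by contradiction, assuming this supremum tends to $0$. Covering $\R$ by the unit intervals $(k,k+1)$, $k\in\Z$, and writing $S_k=\R^2\times(k,k+1)$, the plan is to invoke a \emph{localized Gagliardo--Nirenberg inequality exploiting the two-dimensional confinement}, of the schematic form
\[
\|u_n\|_{L^{p+1}(\R^3)}^{p+1}\le C\Big(\sup_{z\in\R}\int_{\R^2\times(z,z+1)}|u_n|^2\,dx\Big)^{\kappa}\,\|u_n\|_{H}^{2},\qquad \kappa=\kappa(p)>0 .
\]
Since $\|u_n\|_H$ is bounded by hypothesis, the right-hand side would then tend to $0$, forcing $\|u_n\|_{L^{p+1}}\to0$ and contradicting $\|u_n\|_{L^{p+1}}>\varepsilon_0$. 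I expect this to be the main obstacle. The delicate point is that a naive three-dimensional covering by balls (Lions' original lemma) yields such an inequality only for $p<1+4/3$, the $L^2$-critical exponent in dimension $3$; the gain up to $p<5$ comes precisely from the confinement. Indeed, in the two confined variables one has the embedding of the harmonic-oscillator form domain into every $L^q(\R^2)$, $q<\infty$, with the $L^2$-factor entering to the exact power $2$, so that only the single variable $x_3$ carries Gagliardo--Nirenberg weight and the relevant criticality becomes $1+4/(n-d)=1+4/1=5$ (here $n=3$, $d=2$). The work is then to carry out the exponent bookkeeping on each $S_k$ so that the high-order norm $\|u_n\|_{H(S_k)}$ appears to a power that can be summed against $\sum_k\|u_n\|_{H(S_k)}^2=\|u_n\|_H^2$; this anisotropic estimate is classical, and I would follow the argument of \cite{BV} (compare also the waveguide setting of \cite{TTV}).

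Granting the lower bound on the slab masses, along a subsequence there are $c_0>0$ and points $z_n\in\R$ with $\int_{\R^2\times(z_n,z_n+1)}|u_n|^2\,dx\ge c_0$ (one may select $z_n$ from the sliding-window concentration function, realizing the supremum up to a fixed factor). Setting $v_n(x_1,x_2,x_3)=u_n(x_1,x_2,x_3-z_n)$ and using that the potential $x_1^2+x_2^2$ is invariant under translations in $x_3$, one has $\|v_n\|_H=\|u_n\|_H$, so $(v_n)$ is bounded in $H$ and, up to a further subsequence, $v_n\rightharpoonup\bar u$ in $H$. To see $\bar u\neq0$, I would use that the restriction map $H\to L^2(\R^2\times(0,1))$ is compact: in $(x_1,x_2)$ this follows from the compact resolvent of the harmonic oscillator $-\Delta_{x_1,x_2}+(x_1^2+x_2^2)$, and in the bounded interval of $x_3$ it is the classical Rellich theorem, the two combining to give compactness on the fixed slab $\R^2\times(0,1)$. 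Hence $v_n\to\bar u$ strongly in $L^2(\R^2\times(0,1))$, and passing to the limit in
\[
\int_{\R^2\times(0,1)}|v_n|^2\,dx\ge c_0
\]
gives $\int_{\R^2\times(0,1)}|\bar u|^2\,dx\ge c_0>0$, so that $\bar u\neq0$, as required.
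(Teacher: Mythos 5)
Your overall architecture---covering $\R$ by unit slabs $T_k=\R^2\times(k,k+1)$, selecting $z_n$ from the sliding-window concentration function, translating only in $x_3$, and concluding $\bar u\neq 0$ from the compactness of the restriction of $H$ to a fixed slab into $L^2$ (harmonic confinement in $(x_1,x_2)$ combined with Rellich in $x_3$)---coincides with the paper's proof, and that final step is correct as you state it. The genuine problem is the inequality you place at the heart of the argument. By homogeneity under $u\mapsto\lambda u$, your schematic estimate forces $\kappa=(p-1)/2$, i.e.
\[
\|u\|_{L^{p+1}(\R^3)}^{p+1}\le C\Big(\sup_{z\in\R}\int_{\R^2\times(z,z+1)}|u|^2\,dx\Big)^{\frac{p-1}{2}}\,\|u\|_{H}^{2},
\]
and this is \emph{false} for $3<p<5$, hence on part of the paper's range $1+4/3\le p<5$. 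Test $u_\delta(x)=\delta^{-1}\phi\big((x_1,x_2)/\delta\big)\psi(x_3)$ with fixed smooth profiles $\phi,\psi$: the left-hand side scales like $\delta^{1-p}$, the slab masses stay of order $1$, while $\|u_\delta\|_{H}^2\sim\delta^{-2}$, so the inequality would require $\delta^{1-p}\lesssim\delta^{-2}$ as $\delta\to0$, i.e. $p\le 3$. The harmonic weight cannot rescue you: the concentration sits at the origin of the $(x_1,x_2)$-plane, where the potential vanishes, so your heuristic that ``only $x_3$ carries Gagliardo--Nirenberg weight'' ignores exactly the transverse concentration that the confinement does not penalize.

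The missing idea---and the paper's actual route---is that no confinement-adapted inequality is needed here, and your diagnosis that the naive three-dimensional covering works only for $p<1+4/3$ is backwards. The paper first interpolates: since $\|u_n\|_{L^{p+1}}>\varepsilon_0$ and $(u_n)$ is bounded in $H^1(\R^3)\hookrightarrow L^6(\R^3)$, one gets $\|u_n\|_{L^{2+4/3}}>\eta_0>0$; it then applies the plain slab Gagliardo--Nirenberg inequality at the $L^2$-critical exponent $2+4/3$,
\[
\|u\|_{L^{2+\frac43}(T_k)}^{2+\frac43}\le C\,\|u\|_{L^2(T_k)}^{\frac43}\,\|u\|_{H^1(T_k)}^{2},
\]
which sums over $k\in\Z$ precisely because the $H^1$-factor enters with power $2$, giving $\|u\|_{L^{2+4/3}}^{2+4/3}\le C(\sup_k\|u\|_{L^2(T_k)})^{4/3}\|u\|_{H^1}^2$ with no potential whatsoever. (Alternatively, for $p\ge 1+4/3$ one may even stay at the exponent $p+1$: the slab estimate carries the $H^1$-power $3(p-1)/2\ge 2$, and the excess $3(p-1)/2-2$ is absorbed by $\sup_k\|u\|_{H^1(T_k)}\le\|u\|_{H^1}$; it is for $p$ \emph{below} $1+4/3$ that interpolation is forced.) So the confinement plays no role in ruling out vanishing; it enters only where you correctly used it, namely the compactness on the fixed slab. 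Relatedly, the restriction $p<1+4/(n-d)$ you invoke is attached, in the paper, to the comparison Lemma \ref{lem:comparison}, not to the present lemma, whose hypotheses impose no potential-dependent restriction on $p$.
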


\begin{proof}
By interpolation we get
\begin{equation}\label{eq:lowerbound}\|u_n\|_{L^{2+4/3}}>\eta_0>0,
\quad \forall n.
\end{equation}
Moreover we have
$$\|u\|_{L^{2+\frac 43}(T_k)}^{2+\frac 43}\leq C \|u\|_{L^2(T_k)}^{\frac 43} \|u\|_{H^1(T_k)}^2$$
where
$$T_k=\R^2 \times (k, k+1), \quad k\in \Z$$
and hence by taking a sum over $k\in \Z$ we get:
$$\|u\|_{L^{2+\frac 43}}^{2+\frac 43}\leq C (\sup_k \|u\|_{L^2(T_k)})^{\frac 43} \|u\|_{H^1}^2.$$
Hence (due to the lower bound \eqref{eq:lowerbound} 
and due to the boundedness
of $(u_n)$ in $H^1$) we get
$$\exists k_n \hbox{ s.t. } \inf \|u_n(x_1, x_2, x_3)
\|_{L^2(T_{k_n})}>\delta_0>0$$ and the sequence
$w_n(x)=u_n(x_1, x_2, x_3 - k_n)$ satisfies
$$\sup (\int_{T_1} |\nabla_x w_n|^2 +\int_{T_1} (x_1^2 + x_2^2) |w_n|^2 dx
+\int_{T_1} |w_n|^2 dx)
<\infty,$$
$$\|w_n\|_{L^2(T_1)}>\delta_0.$$
By a compactness argument (that comes from the confining potential
$(x_1^2+ x_2^2)$) we deduce that $(w_n)$ has a non-trivial weak limit in $L^2(T_1)$.
\end{proof}

\subsection{Avoiding Dichotomy for Minimizing Sequences}

From now on we assume $\chi>0$ fixed 
% and we define
% $$\inf_{u\in S_r\cap B_{\chi}} E(u)=J_r.$$
and let $r_0=r_0(\chi)>0$ be the associated number
via Lemma \ref{lem:localmin}.

Our next lemma will be crucial to obtain the compactness of minimizing sequences.
\begin{lemme}\label{lem:dic}
Let $1 < p<5$ be fixed.
We have the following inequality for $0<r<s<\min\{1, r_0\}$
$$ r^2 J_s^\chi < s^2 J_r^\chi.$$
\end{lemme}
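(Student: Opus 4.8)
The plan is to bound $J_s^\chi$ from above by rescaling a minimizing sequence for $J_r^\chi$ up to the mass level $s$, and then to exploit the superquadratic character of the nonlinearity (the exponent $p+1>2$) to gain strictly more than the naive factor $s^2/r^2$. First I would use Lemma~\ref{lem:localmin}, specifically the gap~\eqref{eq:2}, to localize the minimizing sequences. Indeed \eqref{eq:2} gives
\[
\inf_{S_r\cap B_{\chi r}}E\ \leq\ \inf_{S_r\cap B_{\chi r/2}}E\ <\ \inf_{S_r\cap(B_\chi\setminus B_{\chi r})}E,
\]
so that $J_r^\chi=\inf_{S_r\cap B_{\chi r}}E$ and any minimizing sequence $(u_n)\subset S_r\cap B_\chi$ with $E(u_n)\to J_r^\chi$ must eventually satisfy $\|u_n\|_{\dot H}\leq\chi r$ (for $n$ large its energy falls below $\inf_{S_r\cap(B_\chi\setminus B_{\chi r})}E$, forcing $u_n\in B_{\chi r}$).

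Next I would set $v_n:=\frac{s}{r}u_n$. By construction $\|v_n\|_{L^2}^2=\frac{s^2}{r^2}r^2=s^2$, so $v_n\in S_s$, while $\|v_n\|_{\dot H}^2=\frac{s^2}{r^2}\|u_n\|_{\dot H}^2\leq s^2\chi^2<\chi^2$ using $s<1$; hence $v_n\in S_s\cap B_\chi$ is an admissible competitor and $J_s^\chi\leq E(v_n)$. Keeping the rescaled functions inside the $\dot H$-ball $B_\chi$ is the crux of the argument, and it is exactly here that the localization in $B_{\chi r}$ and the hypothesis $s<1$ enter: without the local-minimum structure a plain dilation by $s/r>1$ would in general leave $B_\chi$. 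This is the step I expect to be the main obstacle; everything else is bookkeeping with the scaling exponents.

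A direct computation then gives
\[
E(v_n)=\frac{s^2}{r^2}E(u_n)+\frac{1}{p+1}\Big(\frac{s^2}{r^2}-\frac{s^{p+1}}{r^{p+1}}\Big)\int |u_n|^{p+1}\,dx .
\]
Since $r<s$ and $p+1>2$ we have $\frac{s^{p+1}}{r^{p+1}}>\frac{s^2}{r^2}$, so the coefficient multiplying the (nonnegative) integral is strictly negative. Finally I would invoke Lemma~\ref{lem:novanishing} to guarantee $\liminf_n\int|u_n|^{p+1}\,dx>0$, which keeps this correction term bounded away from zero. Passing to the limit,
\[
J_s^\chi\leq\liminf_n E(v_n)\leq\frac{s^2}{r^2}J_r^\chi+\frac{1}{p+1}\Big(\frac{s^2}{r^2}-\frac{s^{p+1}}{r^{p+1}}\Big)\liminf_n\int|u_n|^{p+1}\,dx<\frac{s^2}{r^2}J_r^\chi,
\]
which, after multiplying by $r^2>0$, is exactly the claimed inequality $r^2J_s^\chi<s^2J_r^\chi$.
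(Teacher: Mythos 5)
Your proposal is correct and follows essentially the same route as the paper's own proof: localize the minimizing sequence in $B_{\chi r}$ via Lemma~\ref{lem:localmin}, rescale by $s/r$ (using $s<1$ to stay in $B_\chi$), and combine the strictly negative coefficient $\frac{s^2}{r^2}-\frac{s^{p+1}}{r^{p+1}}$ with the uniform lower bound on $\int|u_n|^{p+1}\,dx$ from Lemma~\ref{lem:novanishing}. Nothing is missing.
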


\begin{proof}
Let $(v_n) \subset S_r\cap B_{\chi}$ be such that $\lim_{n\rightarrow \infty}
E(v_n)=J_r^\chi$.
Notice that, by Lemma \ref{lem:localmin} and since  $r<r_0(\chi)$ we can assume $v_n\in B_{\chi r}$ for every 
$n$ large enough.
In particular we have
$$\frac sr v_n\in S_s\cap B_{\chi s}\subset S_s\cap B_{\chi}$$
provided that $s<1$. In particular we get
$$J_s^\chi\leq 
E(\frac sr v_n)$$$$=\frac{s^2}{r^2}\big (\frac 12\int |\nabla_x v_n|^2 dx +
\frac 12 \int (x_1^2+ x_2^2) |v_n|^2 dx\big )
- \frac 1{p+1} \frac{s^{p+1}}{r^{p+1}} \int |v_n|^{p+1} dx.$$
Recall that by Lemma \ref{lem:novanishing} we can assume that
$$ \frac{1}{p+1}\|v_n\|_{L^{p+1}}^{p+1}>\delta_0>0$$
and hence we can continue the estimate above as follows
\begin{align*}
...&=
 \frac{s^2}{r^2}\big (\frac 12 \int |\nabla_x v_n|^2 dx 
+ \frac 12
\int (x_1^2+ x_2^2) |v_n|^2 dx
- \frac 1{p+1} \int |v_n|^{p+1} dx\big )
+\frac 1{p+1} 
\big (\frac{s^2}{r^2}\\
&\qquad- \frac{s^{p+1}}{r^{p+1}}\big)\int |v_n|^{p+1} dx \\
&\leq \frac{s^2}{r^2}E(v_n) + \big (\frac{s^2}{r^2} - \frac{s^{p+1}}{r^{p+1}}
\big ) \delta_0
\leq \frac{s^2}{r^2} J_r^\chi  +\big (\frac{s^2}{r^2}- \frac{s^{p+1}}{r^{p+1}}\big ) \delta_0 + o(1)
<J_r^\chi \frac{s^2}{r^2},
\end{align*}
for every $n$ large enough.
\end{proof}

\subsection{Conclusion of the Proof of Theorem \ref{th:main}}
It is now classical, see for example \cite{Ca}, that the orbital stability of the set ${\mathcal M}_r^\chi$ is equivalent to the fact that any minimizing sequence $(u_n) \subset S_r\cap B_\chi  $
is compact up to translation. Namely that there exists $ (k_n) \subset \R $
s.t.
$ u_n(x_1, x_2, x_3 - k_n)$  is compact in $H$.
Let us
fix $(u_n) \subset S_r\cap B_{\chi}$ such that $\lim_{n\rightarrow \infty}
E(u_n)=J_r^\chi$. By Lemma \ref{lem:novanish}
we get the existence
of $(k_n) \subset \R$ such that 
$w_n\rightharpoonup\bar w \neq 0$
where $(w_n)=(u_n(x_1, x_2, x_3 - k_n))$.
We claim that if $\|\bar w\|_{L^2}=r$ then the strong convergence of $(w_n)$ to $\bar w$ in $H$ holds. Indeed if $\|\bar w\|_{L^2}=r$ then $w_n \to \bar w$ strongly in $L^2(\R^3)$ and using the Gagliardo-Sobolev inequality \eqref{GN} we get that $w_n \to w$ strongly in $L^{p+1}(\R^3)$. We then deduce, from the weak convergence in $H$, that $J(w) \leq \lim J(w_n) = J_r^{\chi}$. Now if we assume that $\|w\|_{H}^2 < \liminf_{n\rightarrow \infty} \|w_n\|_{H}^2$ we obtain the contradiction that $J(w) < J_r^{\chi}$. Thus $\|w\|_H^2 = \lim_{n\rightarrow \infty} \|w_n\|_{H}^2$ and, using again the weak convergence, we deduce that $w_n \to w$ strongly in $H$. 
To prove that $\|\bar w\|_{L^2}=r$ we assume by contradiction 
that $\|\bar w\|_{L^2}=\bar r<r$ and split
$w_n=(w_n -\bar w)+ \bar w$. We have
$$\|w_n\|_{\dot H}^2= \|w_n -\bar w\|_{\dot H}^2+ \|\bar w\|_{\dot H}^2+ o(1)$$
$$\|w_n\|_{L^2}^2= \|w_n -\bar w\|_{L^2}^2+ \|\bar w\|_{L^2}^2+ o(1).$$
Moreover by using the Brezis-Lieb Lemma~\cite{BL} we also have
$$\|w_n\|_{L^{p+1}}^{p+1}= \|w_n -\bar w\|_{L^{p+1}}^{p+1}+ \|\bar w
\|_{L^{p+1}}^{p+1}+ o(1)
$$
and hence
$$E(u_n)= E(w_n - \bar w) + E(\bar w)+ o(1) \geq J_{r_n}^\chi + J_{\bar r}^\chi + o(1)$$
where $r_n^2=\|w_n -\bar w\|_{L^2}^2$ and $\bar r^2=\|\bar w\|_{L^2}^2$,
hence $r_n^2+ \bar r^2= r^2+ o(1)$.
We can also assume that $r_n\rightarrow l$
and hence
$l^2+ \bar r^2=r^2$ which implies,
by the identity above and by Lemma \ref{lem:dic}
$$J_r^\chi\geq J_{l}^\chi + J_{\bar r}^\chi> \frac{l^2}{r^2}J_r^\chi + \frac{\bar r^2}{r^2}J_r^\chi=J_r^\chi.$$ 
This contradiction proves that necessarily
$\bar w\in S_r$. \qed
\section{Proof of Theorem \ref{complex}}

This section is devoted to the proof of Theorem \ref{complex}.

\subsection{Characterization of \texorpdfstring{$\C$}{C}-valued Minimizers}
Let $w\in H^1(\R^3; \C)$ be a complex valued minimizer. A standard elliptic regularity bootstrap ensures that $w$ is of class $C^1$.
It is well-known by the diamagnetic inequality
that also $|w|\in C^1(\R^3; \R)$ is a minimizer. Moreover by the Euler-Lagrange equation
and by using the strong maximum principle we get
$|w|>0$ and thus $w\in C^1(\R^3; \C\setminus\{0\})$.
Now observe that since $w$ and $|w|$ are minimizers, and since
all the terms involved in the energy (that we are minimizing) are unchanged
by replacing $w$ by $|w|$ except in principle the kinetic term,
we deduce that the unique possibility for $w$ and $|w|$ to be both minimizers
is that 
$\int_{\R^3} |\nabla_{x} |w||^2 dx=\int_{\R^3} |\nabla_{x} w|^2 dx$.
We conclude by Theorem \ref{elem} given in the Appendix.   

\subsection{Symmetry of Minimizers}
We now focus on the symmetry of the minimizers.
As pointed out to us by A. Farina, moving planes techniques as in \cite{LiNi} could likely be used to obtain the radial symmetry and monotonicity properties w.r.t. the $(x_1, x_2)$ variables and $x_3$ variable respectively. We have chosen here however to proceed differently and in particular to rely on the Schwartz symmetrization and reflexion type arguments. Doing so we are lead to establish results, see in particular Theorem \ref{nab}, that we think have their own interest and could be useful in other contexts where in principle the moving plane techniques do not work (e.g. the case of systems.

% \end{rem}

Assume that
 $u(x_1,x_2,x_3)$ is a real  minimizer. We introduce the partial symmetrization
w.r.t to the variables $(x_1, x_2)$:
\[
\tilde u(x_1, x_2,x_3)=u_{x_3}^*(x_1, x_2)
\]
where $u_{x_3}(x_1, x_2)=u(x_1,x_2,x_3)$ and $*$ denotes the Schwartz
rearrangement w.r.t. $(x_1, x_2)$ (see~\cite{LiLo}).
The following properties hold
$$\int_{\R^3} |\nabla_{x} \tilde u|^2 dx\leq \int_{\R^3} 
|\nabla_{x} u|^2 dx, 
\int_{\R^3} |\tilde u|^2 dx = \int_{\R^3} 
|u|^2 dx $$
$$
\int_{\R^3} |\tilde u|^{p+1} dx = \int_{\R^3} 
|u|^{p+1} dx$$
see \cite[Theorem 8.2]{BrSo}
and by \eqref{cam1} (from the appendix) we get
\begin{equation}\label{comprt}\int_{\R^2} (x_1^2+ x_2^2) |\tilde u(x_1, x_2,x_3)|^2 dx_1dx_2
\leq \int_{\R^2} 
(x_1^2+x_2^2)|u(x_1, x_2)|^2 dx_1dx_2, \quad \forall x_3\end{equation}
that by integration w.r.t to $dx_3$ gives
$$\int_{\R^3} (x_1^2+ x_2^2) |\tilde u|^2 dx\leq \int_{\R^3} 
(x_1^2+x_2^2)|u|^2 dx.$$
As a consequence we deduce that $\tilde u$ is also a minimizer.
Moreover since $u$ is a minimizer, then necessarily 
$$\int_{\R^3} (x_1^2+ x_2^2) |\tilde u|^2 dx= \int_{\R^3} 
(x_1^2+x_2^2)|u|^2 dx.$$ By this fact 
and \eqref{comprt} we get 
$$\int_{\R^2} (x_1^2+ x_2^2) |\tilde u|^2 dx_1dx_2= \int_{\R^2} 
(x_1^2+x_2^2)|u|^2 dx_1dx_2, \hbox{ a.e. } x_3\in \R$$
and hence by Theorem \ref{nab} (with $V(x_1, x_2)=x_1^2+ x_2^2$)
 we get $u(x_1, x_2,x_3)=\tilde u(x_1, x_2,x_3)$ for $\hbox{ a.e. } x_3\in \R$.

Summarizing $u(x_1,x_2,x_3)$ is radially symmetric and decreasing w.r.t. $(x_1,x_2)$ for 
 $x_3$ in a set with full measure. On the other hand 
 $u(x_1,x_2,x_3)$ is continuous and hence
$u(x_1,x_2,x_3)$ is radially symmetric and decreasing w.r.t. $(x_1,x_2)$ 
for every $x_3$.

We now establish that $u(x_1,x_2,\cdot)$ is even, namely radial with respect to the $x_3$ variable, after a suitable translation in the $x_3$-direction. To obtain this result we follow the approach introduced by O. Lopes \cite{Lo}, see also \cite[Theorem C.3]{Wi}. Clearly, up to a suitable translation along $(0,0,1)$, we can assume that
$$\int_{\R^2 \times [0, + \infty[} |u(x)|^2 dx = \int_{\R^2 \times ]- \infty, 0]} |u(x)|^2 dx.$$
We then define 
$$E_+(u) = \int_{\R^2 \times [0, + \infty[} |\nabla_x u|^2 + (x_1^2 + x_2^2)|u|^2 dx - \int_{\R^2 \times [0, + \infty[} |u(x)|^{p+1} dx$$
$$E_-(u) = \int_{\R^2 \times ]-\infty, 0]} |\nabla_x u|^2 + (x_1^2 + x_2^2)|u|^2 dx - \int_{\R^2 \times ]- \infty, 0]} |u(x)|^{p+1} dx.$$
Let $v$ be the reflection with respect to $x_3=0$ of $u$ restricted to $\R^2 \times [0, + \infty[$. Namely $v(x_1,x_2,x_3) = u(x_1,x_2,x_3)$ if $x_3 \geq 0$ and $u(x_1,x_2,-x_3)$ if $x_3 \leq 0$. Using the regularity of $u$ we deduce that $v \in S_r \cap B_{\chi}$ and then
$$E_+(u) + E_-(u) \leq 2 E_+(u).$$
Similarly by considering $w(x_1,x_2,x_3)= u(x_1,x_2, - x_3)$ if $x_3 \geq 0$ and $u(x_1,x_2,x_3)$ if $x_3 \leq 0$ we obtain that
$$E_+(u) + E_-(u) \leq 2 E_-(u).$$

Hence $E_-(u) = E_+(u)$ and we deduce that $v$ is also a minimizer of $J_r^{\chi}.$ Then there exists $\lambda_1, \lambda_2 \in \R$ such that
\begin{align*}
- \Delta u + (x_1^2 + x_2^2) u - |u|^{p-1}u &= \lambda_1 u\\
- \Delta v + (x_1^2 + x_2^2) v - |v|^{p-1}v &= \lambda_2 v.
\end{align*}
Since $u=v$ on $\R^2 \times [0, + \infty[$ it follows that $\lambda_1 = \lambda_2 := \lambda$ and thus $z$ defined by $z=u-v$ satisfies the linear equation $- \Delta z = L(x) z$ where
$$L(x) = - (x_1^2 + x_2^2) + \lambda + a(x) \quad \mbox{with} \quad
a(x) = \int_0^1 p |v+t(u-v)|^{p-2}(v+ t(u-v)) dt.$$
At this point we conclude, from the Unique Continuation Principle \cite{Ke}, that $z=0$. Thus $u=v$ and $u$ is even.

To conclude we still need to prove that $u(x_1,x_2, \cdot)$ is decreasing as a function of $x_3$.  
As after a suitable translation $k$ in the $x_3$-direction $u$ is even in $x_3$. Without loss of generality, we assume $k=0$ and $u$ is even in $x_3$. Consider for this the partial symmetrization
w.r.t to the variables $x_3$:
\[
 u^\dag(x_1, x_2,x_3)=u_{x_1, x_2}^*(x_3)
\]
where $u_{x_1, x_2}(x_3)=u(x_1,x_2,x_3)$ and $*$ denotes the Schwartz
rearrangement w.r.t. $x_3$.
The following properties hold
$$\int_{\R^3} |\partial_{x_i} u^\dag |^2 dx\leq \int_{\R^3} 
|\partial_{x_i} u|^2 dx, \mbox{ for }i=1,2,\, 
\int_{\R^3} |u^\dag |^2 dx= \int_{\R^3} 
|u|^2 dx,
$$
$$\int_{\R^3} |u^\dag |^{p+1} dx= \int_{\R^3} 
|u|^{p+1} dx$$
see \cite[Corollary 8.1]{BrSo} and 
$$\int_{\R} |\partial_{x_3}u^\dag |^2 dx_3\leq \int_{\R} 
|\partial_{x_3}u|^2 dx_3 ,\,\int_{\R} |u^\dag |^2 dx_3= \int_{\R} 
|u|^2 dx_3 $$
for almost all $(x_1,x_2)$ in $\R^2$ multiplying by $x_1^2+x_2^2$ the last identity and integrating it over $\R^2$ we obtain
$$
\int_{\R^3} (x_1^2+ x_2^2) |u^\dag(x_1, x_2,x_3)|^2 dx_1dx_2
= \int_{\R^3} 
(x_1^2+x_2^2)|u(x_1, x_2,x_3)|^2 dx_1dx_2dx_3.$$

As a consequence we deduce that $u^\dag$ is also a minimizer and necessarily
$$\int_{\R} |\partial_{x_3} u^\dag |^2 dx_3= \int_{\R} 
|\partial_{x_3} u|^2 dx_3$$
for a.e. $(x_1,x_2)$ in $\R^2$. We now use \cite[Lemma 3.2]{BroZi} characterizing functions satisfying this equality. Some comments are in order. This lemma is stated for compactly supported functions but can be extended as long as one assumes, as in \cite[Lemma 9]{BJM-2}, that $(u-t)_+\in H^1(\R)$ and has compact support for any $t>0$, see also \cite[Remark 4.5]{BroZi}. This extension uses ideas similar to the proof of Theorem~\ref{nab} below. 
In the present case $u$ is $C^1$ and tends to $0$ at infinity in $x_3$. Indeed each of its partial derivative in $x_3$ is square integrable for almost all (hence all by continuity) $(x_1,x_2)\in\R^2$. So $(u-t)_+\in H^1(\R)$ and has compact support for any $t>0$. We thus have with \cite[Lemma 3.2]{BroZi} that the level sets $\{|u(x_1,x_2,\cdot)|>t\}$ are, up to a negligible set in $(x_1, x_2)$, intervals in $\R$. Since $u$ is continuous this is exactly an open interval. Then as $\{|u(x_1,x_2,\cdot)|\geq t\}=\cap_{s<t}\{|u(x_1,x_2,\cdot)|> s\}$ it is an interval which is closed.

As a consequence of this fact we deduce that for any given $(x_1, x_2)$
the functions $u_{x_1, x_2}:\R\ni x_3\rightarrow u(x_1,x_2, x_3)\in \R$
are decreasing for $x_3\geq k(x_1, x_2)$ and increasing for 
for $x_3\leq k(x_1, x_2)$, where $k(x_1, x_2)\in \R$ is any point where the function
$u_{x_1, x_2}$ has a maximum. We conclude provided that we show that 
$k(x_1, x_2)=0$. Notice that if by contradiction it is not true, then by the
evenness of the function $u_{x_1,x_2}$ (proved above) we get that $-k(x_1, x_2)\neq k(x_1, x_2)$ are both points of maximum.
Hence by convexity (proved above) of the set 
$\{x_3\in \R|u_{x_1, x_2}(x_3)\geq u_{x_1, x_2}(k(x_1, x_2))\}$
we deduce that every point belonging to the interval 
$[-k(x_1, x_2), k(x_1, x_2)]$ is  a maximum point for $u_{x_1, x_2}$, and in particular we can choose $k(x_1, x_2)=0$ as  maximum point of $u_{x_1, x_2}$. \qed

\begin{rem}
Finally we point out that our arguments can be adapted in the more general context described in Remark \ref{rem:extension}. Note that in this context the argument from \cite[Lemma 3.2]{BroZi} provides that all level 
sets in the unconfined variable (freezing the confined ones) are balls while the argument from \cite{Lo} provides that any hyperplan (in the unconfined subspace) is up to a translation (in the unconfined variables) a symmetry hyperplan. Coupling both arguments we have that all the level sets are concentric balls by considering any hyperplan orthogonal to lines joining the centres of any pair of such balls.\\ 
Note that without the the argument from \cite{Lo}, \cite[Lemma 3.2]{BroZi} gives that up to a translation in the unconfined variables (which may depend on the confined ones) the minimizers is decreasing along any ray from the origin. But it is not necessarily radially decreasing.
\end{rem}

\subsection{Lagrange Multipliers} Since $u\in \mathcal M_r^\chi$ 
implies $u\in B_{\chi r}$ (see Theorem \ref{th:main}),
we deduce that $u$ is a critical point of $E(u)$ on $S_r$ and hence 
there exists $\lambda=\lambda(u) \in \R$ such that
$$- \Delta u + (x_1^2+x_2^2)u - u |u|^{p-1} = \lambda u.$$
Multiplying by $u$ and integrating by parts we get
\begin{equation}\label{145}
\lambda = \frac{1}{\int |u|^2 dx}\big ( \|u\|_{\dot H}^2 - \int |u|^{p+1} dx\big )
<\frac{2 E(u)}{r^2}
\end{equation}
where we have used $p>1$. Moreover by Lemma \ref{lem:comparison} 
 $E(u) < r^2 \Lambda_0/2$, and hence 
$$\lambda < \frac{2 E(u)}{r^2} < \Lambda_0.$$
On the other hand since
\begin{equation}\label{eq:estimateLp} 
\int |u|^{p+1} dx \leq C r^{\frac{5-p}{2}} \|u\|_{\dot H}^{\frac{3p-3}{2}}
\end{equation}
by \eqref{GN}, we obtain from (\ref{145}) and the fact that 
$u\in B_{\chi r}$ (see Theorem \ref{th:main})
that

\begin{eqnarray}\label{1000}
\lambda
& \geq&  \frac{1}{r^2}\big ( \|u\|_{\dot H}^2 - C r^{\frac{5-p}{2}}\|u\|_{\dot H}^{\frac{3p-3}{2}}\big) \geq 
 \frac{\|u\|_{\dot H}^2}{r^2}\big( 1 - C r^{\frac{5-p}{2}}\|u\|_{\dot H}^{\frac{3p-7}{2}}
\big) \nonumber
\\
& \geq & \frac{\|u\|_{\dot H}^2}{r^2}\big(1 - C r^{\frac{5-p}{2}}(\chi^2 r^2)^{\frac{3p-7}{4}}\big) 
\geq  \frac{\|u\|_{\dot H}^2}{r^2}\big( 1 - C r^{p-1}\big) \nonumber
\end{eqnarray}
where the value of the constant $C>0$ can change at every step.
Now by definition of $\Lambda_0$ 
$$\frac{\|u\|_{\dot H}^2}{r^2} \geq \Lambda_0$$
and it follows that
$$\lambda \geq \Lambda_0 (1 - C r^{p-1}).$$
In summary, for some constant $C>0$,
$$(1 - C r^{p-1}) \Lambda_0 \leq \lambda < \Lambda_0$$
and in particular $\lambda \to \Lambda_0$ as $r \to 0$.\qed

% \hfill$\Box$

\subsection{Asymptotic Profile in the Small Soliton Limit}
We are now in position to complete the proof of Theorem~\ref{complex} by establishing  the asymptotic profile in the limit $r\to 0$ (see \eqref{eq:profile}).
Recall that from Theorem~\ref{th:main}, for every fixed $\chi>0$, for every $r<r_0(\chi)$ , 
\[
{\mathcal M}_r^{\chi}:= \{u\in S_r \cap {B}_{\chi}
\mbox{ s.t. } E(u)=J_r^\chi\}\neq \varnothing.
\]
% there exists $\lambda=\lambda(u)>0$ such that 
% $$-\Delta u + (x_1^2+ x_2^2) u -u|u|^{p-1}=\lambda u, \quad \hbox{ on } \R^3$$
% with the estimates:
% $$(1 - C r^{p-1}) \Lambda_0 \leq \lambda < \Lambda_0$$
% where $C>0$ is an universal constant and 
% $\Lambda_0=\inf (\hbox{ \em spec }(-\sum_{i=1}^3 \partial_{x_i}^2 + (x_1^2+x_2^2)))$.
% We now consider a sequence of minimizers $(u_r)_{r>0}$ (that is $u_r\in {\mathcal M}_r^{\chi}$) as $r\to 0$. 
First observe that since 
$${\mathcal M}_r^{\chi}\subset B_{\chi r}, \quad \forall r<r_0$$
we have, using  \eqref{eq:estimateLp},  that %and from Theorem~\ref{th:main}
\begin{equation}\label{44}
\exists C>0,\,\forall u\in  {\mathcal M}_r^{\chi},\, \|u\|_{L^{p+1}}^{p+1}\leq C r^{p+1}.
\end{equation}
% and thus
% $$-Cr^{p-1}\Lambda_0\leq \lambda(u_r)-\Lambda_0\leq 0$$ 
% where $\lambda(u_r)$ is the Lagrange multiplier corresponding to $u_r$.

Now using again the Hilbert basis $(\Psi_j)$ of $L^2(\R^2)$  introduced in the proof of Lemma \ref{lem:spect} we write for any $u\in {\mathcal M}_r^{\chi}$:
\[
 u(x_1,x_2,x_3)=\sum_{j\in \N \cup \{0\}} \varphi_{j}(x_3)\Psi_j(x_1,x_2),
\]
where 
\[
\varphi_j(x_3)=\int u(x_1,x_2,x_3) \Psi_j(x_1,x_2) dx_1dx_2. 
\]
We have $\varphi_{j} \in L^2(\R)$ and by orthogonality
\[
  \sum_{j\in\N\cup\{0\}} \|\varphi_{j}\|_{L^2}^2=\sum_{j\in\N\cup\{0\}} \|\varphi_{j}\|_{L^2}^2\|\Psi_j\|_{L^2}^2=\|u\|_{L^2}^2=r^2.
\]
Also, as in the proof of Lemma \ref{lem:spect}, and taking into account (\ref{44})
\begin{equation}\label{45}
E(u)\geq  \frac{1}{2}\sum_{j\in\N\cup\{0\}}  \|\partial_{x_3} \varphi_{j}\|_{L^2}^2 + \lambda_j \|\varphi_{j}\|_{L^2}^2 - Cr^{p+1}.
\end{equation}
Now we know from Lemma \ref{lem:comparison} 
and Lemma \ref{lem:spect} that $$ E(u) = \inf_{S_r \cap B_{\chi}} E(u)<r^2\frac{\lambda_0}2, \quad \forall r<r_0$$ and thus (\ref{45}) leads to 
\begin{equation}\label{eq:energyestimate}
\frac1{r^2} \sum_{j\in\N\cup\{0\}} \|\partial_{x_3} \varphi_{j}\|_{L^2}^2+ \lambda_j \|\varphi_{j}\|_{L^2}^2 < \lambda_0+ 2Cr^{p-1}.
\end{equation}
In view of the following identity 
%$\Lambda_0^2 r^2=\sum_{n\in\N\cup\{0\}} \Lambda_0^2\|\phi_{n,r}\|_2^2$ 
\[
\frac{\lambda_0}{r^2} \sum_{j\in\N\cup\{0\}} \|\varphi_{j}\|_{L^2}^2=\frac{\lambda_0}{r^2} \|u\|_{L^2}^2=\lambda_0
\]
and by recalling $\lambda_0< \lambda_j$ for all $j \in \N$,
we get from the estimate above
\begin{equation}\label{cigar}
\sum_{j\in\N\cup\{0\}} \frac1{r^2}\|\partial_{x_3} \varphi_{j}\|_{L^2}^2<  2Cr^{p-1}
\quad\mbox{ and }\quad
\sum_{j\in\N} \frac1{r^2}\|\varphi_{j}\|_{L^2}^2<  \frac{2C}{\inf_{j\in\N}\left(\lambda_j-\lambda_0\right)}r^{p-1}
\end{equation}
or incidentally, 
\begin{equation}\label{eq:coordinateestimate} 
\left|\frac{\|\varphi_{0}\|_{L^2}^2}{r^2}- 1\right|<  \frac{2C}{\inf_{j\in\N}\left(\lambda_j-\lambda_0\right)}r^{p-1}.
\end{equation}
From \eqref{eq:energyestimate}, \eqref{cigar} and \eqref{eq:coordinateestimate} it follows that there exists $C>0$ such that for any $u\in {\mathcal M}_r^{\chi}$
% \[
% \frac{\left\|\langle u_r,\phi_{0,r}\rangle -\psi_0\right\|^2}{r^2 }\to 0
% \]
% or
\[
 \left\|\frac{u}{r} -\frac{\varphi_{0}}{r} \Psi_0\right\|_{\dot{H}} =  \sqrt{\frac1{r^2}\sum_{j\in\N}\|\partial_{x_3}\varphi_{j}\|_{L^2}^2+\lambda_j\|\varphi_{j}\|_{L^2}^2 }\leq C r^{\frac{p-1}{2}},
\]
and we get \eqref{eq:profile}.

%Next we prove \eqref{eq:profile3}.
%Notice also that by a standard rescaling argument we get
%$${\mathcal N}_r\subset \big \{\psi\in H^1(\R) \hbox{ s.t. } \|\partial_x \psi\|_{L^2(\R)}=
%C_0 r^\frac{3+p}{5-p}\big \}$$
%where $C_0>0$ is an universal constant. In particular
%$\|\partial_x \psi\|_{L^2(\R)}=o(r)$ for every $\psi\in {\mathcal N}_r$.
%Moreover we have by \eqref{cigar} that 
%$\|\partial_x \varphi_0\|_{L^2}=o(r)$ and hence
%we get the conclusion.

%The proof is complete. 
\qed

%Note that $\|\varphi_{0}\|_{H^1(\R)}=0(r)$. This completes the proof of \eqref{eq:profile}. \qed

% \end{proof}

% \nb{Below are some computations I made in my attempts to identify the profile of $\phi_{0,r}$.}
% 
% 
% Note that
% \[
%  S_\lambda u_r -S_\lambda \phi_{0,r} \psi_0 \to 0
% \]
% 
% We write the energy
% \begin{align*}
% E(u_\lambda)=&\lambda^{4/(p-1)}\int_{\R^2}\left(\frac 12 \|\partial_{x_3} u(x_1,x_2,x_3)\|_2^2- \frac 1{p+1}\int_{\R} |u(x_1, x_2, x_3)|^{p+1} \mathrm{d}x_3\right)\,d(x_1,x_2)\\
% &+\frac 12\left( \|\nabla_{x_1,x_2} u\|_2^2 +\|\sqrt{x_1^2+x_2^2} u\|_2^2\right),
% \end{align*}
% 
% Let us recall that if $u$ is a solution of
% \[
% i\partial_t u +\Delta u = u|u|^{p-1} 
% \]
% then if $E_0$ is the ground eigenvalue of theis $1-$dimensional NLS equation with charge constraint $\|u\|_2=1$ then
% \[
%  \frac12 \|\partial_{x_3} u\|_2^2- \frac1{p+1}\int_{\R} |u(x_3)|^{p+1} \mathrm{d}x_3\geq E_0
% \]
% for $\|u\|_2=1$ otherwise using the scaling $u_\lambda(x_3)=(U_\lambda u)(x_3)=\lambda^{2/(p-1)}u(\lambda x_3)$ with $\lambda=\|u\|_2^{\frac{2(p-1)}{5-p}}$ we obtain
% \[
%  \frac 12 \|\partial_{x_3} u\|_2^2- \frac 1{p+1}\int_{\R} |u(x_3)|^{p+1} \mathrm{d}x_3\geq E_0 \|u\|_2^{8/(5-p)}
% \]
% note that with $u=c v$ we arrive at 
% \[
%  \frac12 \|\partial_{x_3} v\|_2^2- \frac{c^{p-1}}{p+1}\int_{\R} |v(x_3)|^{p+1} \mathrm{d}x_3\geq E_0 c^{8/(5-p)-2}\|u\|_2^{8/(5-p)}
% \]
% or
% \[
%  \frac12 \|\partial_{x_3} v\|_2^2- \frac\kappa {p+1}\int_{\R} |v(x_3)|^{p+1} \mathrm{d}x_3\geq E_0 \kappa^{2/(5-p)}\|v\|_2^{4/(p-1)}
% \]
\section{Proof of Theorem \ref{ground-state}}

% First we give the proof of Theorem \ref{ground-state}. \\
We fix $\chi>0$ and  %take  $u\in \mathcal{M}_r^{\chi}$. 
% Then $u$ solves, for a $\lambda \in \R$,
% \begin{equation}\label{j1}
% -\Delta u + (x_1^2+ x_2^2) u -u|u|^{p-1}=\lambda u, \quad \hbox{ on } \R^3.
% \end{equation}
% First we prove that 
notice that
\begin{equation}\label{eq:asyJ}
\lim_{r \rightarrow 0} J_{r}^{\chi} =0.
\end{equation}
This follows from Point \eqref{th:mainPoint2} in Theorem~\ref{th:main} and \eqref{GN}.
% Using (\ref{j1}) we have
% \begin{equation}\label{eq:impp}
% J_{r}^{\chi}=E(u)=\frac{1}{2}||u||_{\dot H}^2-\frac{1}{p+1}||u||_{L^{p+1}}^{p+1}=\frac{\lambda r^2}{2}+\frac{p-1}{2(p+1)}||u||_{L^{p+1}}^{p+1}.
% \end{equation}
% Also from the Gagliardo-Sobolev inequality (\ref{GN}) we know that
% $$\lim_{r \rightarrow 0} ||u||_{L^{p+1}}^{p+1}=0.$$
% Thus recording from Theorem \ref{complex} that $\lambda \to \Lambda_0$ as $r \to 0$ we obtain 
% % \eqref{eq:impp} and from the estimate 
% %$$(1 - C r^{p-1}) \Lambda_0 \leq \lambda < \Lambda_0$$
% %where $C>0$ is an universal constant and 
% %$\Lambda_0=\inf (\hbox{ \em spec }(-\sum_{i=1}^3 \partial_{x_i}^2 + (x_1^2+x_2^2))$
% \eqref{eq:asyJ}. 

Now let us suppose by contradiction the existence of a critical point $\bar u$ for $E$ on $S_r$ with $E(\bar u)<J_r^{\chi}$. It is standard to prove, see for example \cite{BeJe,Je}, that 
% any $v \in S_r$ with $ E'|_{S_r}(v) =0$ 
% fulfills the identity 
$P(\bar u)=0$ where 
$$P(\bar u):=\|\nabla_x \bar u\|_{L^2}^2- \int (x_1^2+ x_2^2) |\bar u|^2 dx-\frac{3(p-1)}{2(p+1)}||\bar u||_{L^{p+1}}^{p+1}.$$ 
So to say, the condition $P(\bar u)=0$ corresponds to a Pohozaev's type identity on $S_r$. 
Thus we can write
$$
E(\bar u)=E(\bar u)-\frac{2}{3(p-1)}P(\bar u)=\frac{3p-7}{6(p-1)}\|\nabla \bar u\|_{L^2}^2+\frac{3p+1}{6(p-1)} \int (x_1^2+ x_2^2) |\bar u|^2 dx>\frac{3p-7}{6(p-1)}\|\bar u\|_{\dot H}^2.
$$
In particular, by using \eqref{eq:asyJ}, we get
% \begin{equation}\label{eq:contr}
\[ 
\frac{3p-7}{6(p-1)}\|\bar u\|_{\dot H}^2 < E(\bar u )<J_r^{\chi}=o(1).
\]
At this point we have reached a contradiction since  this implies that, for $r>0$ small enough, $\|\bar u\|_{\dot H}^2<\chi^2$ and $J_r^{\chi}$ is the infimum of the energy on $S_r \cap B_{\chi}$. \qed

\appendix

\section{A remark on Schwartz symmetrization}

The main result that we prove on Schwartz symmetrization is the following one. \medskip

For the definition of a function vanishing at infinity, used in the next theorem, we refer to \cite{LiLo}. 
\begin{theoreme}\label{nab}
Let $V:\R^n\rightarrow [0, \infty)$ be a measurable function,
radially symmetric satisfying $V(|x|)\leq V(|y|)$ for $|x|\leq |y|$ then we have:
\begin{equation}\label{cam1}
\int_{\R^n} V(|x|)|u^*|^2 dx\leq \int_{\R^n}  V(|x|) |u|^2 dx.
\end{equation}
If in addition $V(|x|)<V(|y|)$ for $|x|<|y|$ then 
\begin{equation}\label{cam2}
\int_{\R^n}  V(|x|)|u^*|^2 dx= \int_{\R^n}  V(|x|) |u|^2 dx \Rightarrow u(x)=u^*(|x|).
\end{equation}
This result holds for any measurable function $u$ which is vanishing at infinity.
\end{theoreme}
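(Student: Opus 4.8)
The plan is to reduce everything to the distribution-function (layer-cake) representation of the weighted $L^2$-norm, fed by a single geometric lemma: among sets of prescribed measure, a centered ball carries the least $V$-mass. First I would record this lemma. For measurable $A\subset\R^n$ with $0<|A|<\infty$, let $A^\sharp=B_\rho$ be the open centered ball with $|B_\rho|=|A|$; then $\int_{A^\sharp}V\,dx\le\int_A V\,dx$. This is immediate from $|A\setminus A^\sharp|=|A^\sharp\setminus A|$ (both equal $|A|-|A\cap A^\sharp|$) together with $V(|x|)\ge V(\rho)$ a.e. on $A\setminus A^\sharp$ (points outside $B_\rho$) and $V(|x|)\le V(\rho)$ a.e. on $A^\sharp\setminus A$ (points inside $B_\rho$): subtracting gives $\int_A V-\int_{A^\sharp}V\ge V(\rho)\bigl(|A\setminus A^\sharp|-|A^\sharp\setminus A|\bigr)=0$. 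Since $u$ vanishes at infinity, every superlevel set $\{|u|^2>t\}$ has finite measure, so the lemma applies to each of them; this is the only place the weak hypothesis on $u$ enters, and it causes no difficulty.

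Next, to obtain \eqref{cam1} I would apply Tonelli to the nonnegative integrand $V(|x|)\mathbf{1}_{\{|u(x)|^2>t\}}$, writing
\[
\int_{\R^n} V|u|^2\,dx=\int_0^\infty\Bigl(\int_{\{|u|^2>t\}}V\,dx\Bigr)\,dt,
\]
and likewise for $u^*$. Because $\{|u^*|^2>t\}=\{u^*>\sqrt t\}$ is exactly the centered ball of the same measure as $\{|u|^2>t\}$ (equimeasurability of the rearrangement), the geometric lemma yields $\int_{\{|u^*|^2>t\}}V\le\int_{\{|u|^2>t\}}V$ for every $t>0$; integrating in $t$ gives \eqref{cam1}. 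If the right-hand side is infinite there is nothing to prove, while if it is finite then $\int_{\{|u|^2>t\}}V\le t^{-1}\int V|u|^2<\infty$, so every quantity above is genuinely finite and the subtractions are legitimate.

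Finally, for the equality case \eqref{cam2}, now with $V$ strictly increasing, I would argue layer by layer. Equality in \eqref{cam1} together with the pointwise-in-$t$ inequality forces $\int_{\{|u^*|^2>t\}}V=\int_{\{|u|^2>t\}}V$ for a.e. $t>0$. Revisiting the lemma, strict monotonicity gives $V(|x|)>V(\rho)$ for a.e. $x$ outside $B_\rho$ (the sphere $\{|x|=\rho\}$ being null), so equality can hold only when $|A\setminus A^\sharp|=0$; hence $\{|u|^2>t\}$ coincides with the centered ball $\{|u^*|^2>t\}$ up to a null set for a.e. $t$. Equivalently $\{|u|>s\}=\{u^*>s\}$ modulo null sets for a.e. $s>0$, and a Fubini argument across the layers (the exceptional pairs $(x,s)$ form a null set of $\R^n\times(0,\infty)$) recovers $|u|=u^*$ a.e., which is the asserted identity $u(x)=u^*(|x|)$ when $u\ge 0$, as holds for the minimizers to which the theorem is applied.

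I expect the only genuine subtlety to lie in this equality case: one must keep track that strict monotonicity yields $V(|x|)>V(\rho)$ only off the null sphere $|x|=\rho$, so that positive measure of $A\setminus A^\sharp$ truly produces a strict gap, and that the per-level conclusions recombine into an almost-everywhere statement in $x$ rather than merely for a.e. level $s$. The Tonelli/Fubini bookkeeping settles both points, but it is where the argument must be written with care.
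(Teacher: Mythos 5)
Your proposal is correct, but it takes a genuinely different route from the paper. The paper never works layer by layer: for \eqref{cam1} it truncates the potential, setting $V_R=\min\{V,V(R)\}$, observes that $x\mapsto -V_R(|x|)+V(R)$ is a nonnegative symmetric decreasing function equal to its own rearrangement, applies the classical inequality \eqref{fstar} $\int f^*g^*\,dx\geq\int fg\,dx$, subtracts $V(R)\int|u|^2dx$ using equimeasurability, and lets $R\to\infty$ by monotone convergence; for \eqref{cam2} it first shows (by a contradiction argument splitting $V=V_{R_0}+(V-V_{R_0})$ and applying \eqref{cam1} to each increasing piece) that equality persists for every truncation $V_{R_0}$, and then invokes the equality case of the strict rearrangement inequality for symmetric strictly decreasing potentials --- Proposition \ref{LL}, which is the Lieb--Loss Theorem 3.4 adapted to a ball, fed through Proposition \ref{boh} --- to conclude that $u$ restricted to each ball $\{|x|<R_0\}$ is symmetric decreasing. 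Your bathtub-type lemma (a centered ball of prescribed measure minimizes $\int_A V$, strictly so under strict monotonicity when $|A\setminus A^\sharp|>0$) combined with the layer-cake identity replaces both \eqref{fstar} and the citation of the Lieb--Loss equality theorem: you essentially reprove the needed uniqueness directly at the level of superlevel sets, and your Tonelli/Fubini recombination of the a.e.-level identifications into $|u|=u^*$ a.e.\ is exactly the point where care is needed and you handle it correctly. What each approach buys: the paper's argument is shorter on the page because it outsources the equality case to a known theorem (at the cost of justifying its adaptation to balls, and of the subtraction step, which tacitly uses $\int|u|^2dx<\infty$, i.e.\ slightly more than vanishing at infinity), while yours is self-contained, more elementary, and proves \eqref{cam1} without any integrability of $u$ beyond vanishing at infinity. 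One shared caveat: in the equality case both proofs implicitly assume the common value of the weighted integrals is finite (otherwise \eqref{cam2} can fail, e.g.\ for a shifted $1/|x|$-type tail), which is harmless since the theorem is applied to minimizers in $H$, where finiteness is automatic; you flag finiteness explicitly for \eqref{cam1} and should state it once more when deducing the per-level equality for a.e.\ $t$.
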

In the sequel we shall use the following well-known inequality
\begin{equation}\label{fstar}
\int_{\R^n} f^* \cdot g^* dx\geq \int_{\R^n} f \cdot g dx.
\end{equation}

\begin{proposition}\label{LL}
Let $u: \{x\in \R^n||x|<R\}\rightarrow [0, \infty)$ be a measurable function
and let $V:\{x\in \R^n||x|<R\}\rightarrow [0, \infty)$ be radially symmetric
satisfying $V(|x|)>V(|y|)$ for $|x|<|y|<R$.
Then we have the following implication
$$\int_{|x|<R} V(|x|) |u^*|^2 dx= \int_{|x|<R} V(|x|) |u|^2dx
\Rightarrow u(x)=u^*(|x|).$$
\end{proposition}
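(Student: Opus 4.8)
The plan is to reduce the equality case to a statement about the super-level sets of $u$ by combining the layer-cake (bathtub) representation with the Hardy--Littlewood inequality \eqref{fstar}, and then to exploit the strict monotonicity of $V$.

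First I would record the basic inequality. Since $u\geq 0$ and $t\mapsto t^2$ is nondecreasing on $[0,\infty)$, one has $(|u|^2)^*=|u^*|^2$, while $V$, being radially symmetric and decreasing, coincides with its own symmetric rearrangement on $\{|x|<R\}$. Applying \eqref{fstar} with $f=|u|^2$ and $g=V$ therefore gives
$$\int_{|x|<R} V(|x|)\,|u|^2\,dx \leq \int_{|x|<R} V(|x|)\,|u^*|^2\,dx,$$
so the hypothesis of the proposition is precisely the equality case of \eqref{fstar} for this pair.

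Next I would expand both sides through the layer-cake formula. Writing $|u|^2(x)=\int_0^\infty \mathbf 1_{\{|u|^2>t\}}(x)\,dt$ and $V(|x|)=\int_0^\infty \mathbf 1_{\{V>s\}}(x)\,ds$, Fubini gives
$$\int V\,|u|^2\,dx=\int_0^\infty\!\!\int_0^\infty \big|\{V>s\}\cap\{|u|^2>t\}\big|\,ds\,dt,$$
and the analogous identity for $u^*$. Because $V$ is radially decreasing, each $\{V>s\}$ is a ball $B_{\rho(s)}$ centred at the origin, while $\{|u^*|^2>t\}$ is by definition the concentric ball with $|\{|u^*|^2>t\}|=|\{|u|^2>t\}|$. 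Consequently, for every $s,t$,
$$\big|\{V>s\}\cap\{|u^*|^2>t\}\big|=\min\big(|\{V>s\}|,|\{|u|^2>t\}|\big)\geq \big|\{V>s\}\cap\{|u|^2>t\}\big|,$$
the last inequality being the elementary bound $|A\cap B|\leq\min(|A|,|B|)$. Subtracting the two layer-cake identities, the assumed equality forces this nonnegative integrand to vanish, i.e. for a.e. $(s,t)$,
$$\big|B_{\rho(s)}\cap\{|u|^2>t\}\big|=\min\big(|B_{\rho(s)}|,|\{|u|^2>t\}|\big).$$

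Finally I would upgrade this to $u=u^*$. Fix $t$ in the full-measure set for which the displayed identity holds for a.e. $s$, and such that $A_t:=\{|u|^2>t\}$ has measure strictly between $0$ and $|B_R|$ (the cases of measure $0$ or $|B_R|$ are trivial). When $|B_{\rho(s)}|<|A_t|$ the equality reads $|B_{\rho(s)}\cap A_t|=|B_{\rho(s)}|$, so $B_{\rho(s)}\subset A_t$ up to a null set; when $|B_{\rho(s)}|>|A_t|$ it reads $|B_{\rho(s)}\cap A_t|=|A_t|$, so $A_t\subset B_{\rho(s)}$ up to a null set. Here the strict monotonicity hypothesis $V(|x|)>V(|y|)$ for $|x|<|y|$ is decisive: it guarantees that the balls $\{V>s\}$ leave no radial gap, i.e. the essential range of $\rho(s)$ is all of $(0,R)$, so $|B_{\rho(s)}|$ can be made to approach $|A_t|$ both from below and from above. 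Taking the union over those $s$ with $|B_{\rho(s)}|<|A_t|$ and the intersection over those with $|B_{\rho(s)}|>|A_t|$ then sandwiches $A_t$ between concentric balls of measure arbitrarily close to $|A_t|$, whence $A_t$ equals, up to a null set, the concentric ball of measure $|A_t|$, that is $A_t=\{|u^*|^2>t\}$. Since this holds for a.e. $t$, the super-level sets of $|u|^2$ and $|u^*|^2$ agree, giving $|u|^2=|u^*|^2$ a.e., and as $u\geq 0$ this is exactly $u=u^*$. I expect the delicate point to be this last measure-theoretic squeezing, and in particular the verification that strict monotonicity of $V$ removes any radial gap in the family $\{B_{\rho(s)}\}$; that is precisely where the strict inequality in the hypothesis, rather than mere monotonicity, is needed.
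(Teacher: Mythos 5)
Your proof is correct, and it is worth noting that the paper itself does not argue this proposition at all: its ``proof'' is a citation of the classical equality statement \cite[Theorem 3.4]{LiLo} (for a strictly symmetric-decreasing weight, equality in the Hardy--Littlewood inequality \eqref{fstar} forces the other factor to coincide with its rearrangement) together with the remark that the argument there adapts directly to a ball. What you have written out --- the layer-cake expansion of both $V$ and $|u|^2$, the elementary bound $|A\cap B|\leq\min(|A|,|B|)$ with equality exactly for the pair of concentric balls, and the squeeze of each super-level set $A_t$ between centered balls --- is in substance that classical argument, so your proposal supplies in full the adaptation the paper only asserts. Two points deserve attention. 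First, subtracting the two layer-cake identities is legitimate only when the common value of the integrals is finite; this finiteness hypothesis is explicit in \cite[Theorem 3.4]{LiLo} and holds where the proposition is applied in the paper (there the weight is bounded and $u\in L^2$ of a ball), but without it the implication as literally stated fails (take $u$ with a non-square-integrable singularity at an off-center point and $V(|x|)=R+1-|x|$: both integrals are $+\infty$ yet $u\neq u^*$), so you should record it. Second, your ``no radial gap'' step, which you rightly flag as the delicate one, can be made exact rather than approximate: strict monotonicity gives $\{V>V(r)\}=B_r$ for \emph{every} $r\in(0,R)$, so $s\mapsto\rho(s)$ is a monotone surjection onto $(0,R)$, hence continuous, and the preimage of any interval of radii is a nonempty open set of positive measure which therefore meets your full-measure set of good $s$; this is precisely the justification needed to choose admissible $s$ with $|B_{\rho(s)}|$ approaching $|A_t|$ from both sides.
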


\begin{proof}
This result is classical in the case $R=\infty$, more precisely
for functions defined on $\R^d$, see \cite[Theorem 3.4]{LiLo}. The proof given there directly adapt on any ball $\{x\in \R^n||x|<R\}$.
\end{proof}

\begin{proposition}\label{boh}
Let $V:\R^n\rightarrow [0, \infty)$ be a measurable function,
radially symmetric with $V(|x|)>V(|y|)$ for $|x|<|y|<R$ and $V(x)=0$ for $|x|>R$.
Let $u:\R^n\rightarrow [0, \infty)$ be measurable and vanishing at infinity.
The following implication holds:
$$\int_{\R^n} V(|x|) |u^*|^2= \int_{\R^n} V(|x|) |u|^2 dx
\Rightarrow u(x)\cdot \chi_{|x|<R} \hbox{ is radially symmetric and decreasing. }$$
\end{proposition}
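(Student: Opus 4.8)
The plan is to argue at the level of the superlevel sets of $u$ and to exploit that the Hardy--Littlewood inequality \eqref{fstar} already supplies the competing bound $\int V|u|^2\le \int V|u^*|^2$, so that the hypothesis will force equality \emph{level by level}. Since $V$ vanishes outside $\{|x|<R\}$, I first record that $\int_{\R^n}V|u|^2\,dx=\int_{\{|x|<R\}}V|u|^2\,dx$, and likewise for $u^*$. Writing the layer--cake decomposition $|u|^2=\int_0^\infty \chi_{\{u^2>t\}}\,dt$ (and the same for $u^*$), and using that $u\ge 0$ vanishes at infinity so that each $A_t:=\{u^2>t\}$ has finite measure, I would recast the two integrals as
\[
\int_{\R^n} V|u|^2\,dx=\int_0^\infty\Big(\int_{A_t}V\,dx\Big)\,dt,\qquad \int_{\R^n} V|u^*|^2\,dx=\int_0^\infty\Big(\int_{A_t^*}V\,dx\Big)\,dt,
\]
where $A_t^*=\{(u^*)^2>t\}$ is the open centred ball with $|A_t^*|=|A_t|$. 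Applying \eqref{fstar} to $\chi_{A_t}$ and $V$ (recall $V^*=V$ and $(\chi_{A_t})^*=\chi_{A_t^*}$) gives $\int_{A_t}V\le\int_{A_t^*}V$ for every $t$; integrating, the assumed global equality forces $\int_{A_t}V\,dx=\int_{A_t^*}V\,dx$ for a.e.\ $t>0$.

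The core of the argument is the rigidity analysis of this level--by--level equality, which is delicate precisely because $V\equiv 0$ on $\{|x|\ge R\}$. I would fix such a good $t$ and write $A_t^*=\{|x|<\rho(t)\}$. If $\rho(t)\le R$, then any positive mass of $A_t$ outside $\{|x|<R\}$ would force $|A_t\cap\{|x|<R\}|<|A_t^*|$, and since $V$ is strictly decreasing on $\{|x|<R\}$ this would strictly lower $\int_{A_t}V$ below $\int_{A_t^*}V$; hence $A_t\subseteq\{|x|<R\}$ up to a null set, and then the equality together with $|A_t|=|A_t^*|$ and the strict monotonicity of $V$ forces $A_t=A_t^*$ up to a null set (this is exactly the rigidity of Proposition~\ref{LL} applied to $\chi_{A_t}$ on $\{|x|<R\}$). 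If instead $\rho(t)>R$, then $\int_{A_t^*}V=\int_{\{|x|<R\}}V$ is the maximal possible value, so $\int_{\{|x|<R\}\setminus A_t}V=0$, and since $V>0$ on $\{|x|<R\}$ this yields $\{|x|<R\}\subseteq A_t$ up to a null set. In both regimes $A_t\cap\{|x|<R\}$ coincides, up to a null set, with the centred ball of radius $\sigma(t):=\min(\rho(t),R)$.

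Finally I would reassemble. For a.e.\ $t>0$ the superlevel set $\{(u\,\chi_{\{|x|<R\}})^2>t\}=A_t\cap\{|x|<R\}$ is a centred ball of radius $\sigma(t)$, and since $\rho$ (hence $\sigma$) is non--increasing, these balls are nested; then
\[
\big(u\,\chi_{\{|x|<R\}}\big)^2(x)=\int_0^\infty \chi_{\{\sigma(t)>|x|\}}\,dt
\]
is a non--increasing function of $|x|$, so $u\,\chi_{\{|x|<R\}}$ is radially symmetric and decreasing, as claimed. The one genuinely delicate point, and the reason this is not an immediate corollary of Proposition~\ref{LL}, is the degenerate regime $\rho(t)>R$: there equality of the integrals gives no information on $u$ outside $\{|x|<R\}$ (consistently, $u$ need not be symmetric there), so one must extract only the inclusion $\{|x|<R\}\subseteq A_t$ rather than full identification of $A_t$. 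Keeping the two regimes separate and verifying that in each the truncated level set is the centred ball of radius $\min(\rho(t),R)$ is where care is required; the remainder is the routine layer--cake bookkeeping together with the rigidity already furnished by Proposition~\ref{LL}.
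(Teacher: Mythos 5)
Your proof is correct, but it takes a genuinely different route from the paper's, which is much shorter. The paper avoids the layer--cake analysis entirely: it sets $u_R=u\,\chi_{\{|x|<R\}}$, notes that $u_R\le u$ implies $u_R^*\le u^*$, and, since $V$ vanishes for $|x|>R$, obtains the chain
\[
\int_{\R^n} V|u^*|^2\,dx\;\ge\;\int_{\R^n} V|u_R^*|^2\,dx\;\ge\;\int_{\R^n} V|u_R|^2\,dx\;=\;\int_{\R^n} V|u|^2\,dx,
\]
where the middle inequality is \eqref{fstar} together with $V=V^*$. The hypothesis collapses the whole chain to equalities, so $\int_{|x|<R}V|u_R^*|^2\,dx=\int_{|x|<R}V|u_R|^2\,dx$, and a single application of Proposition~\ref{LL} to the truncated function $u_R$ on the ball concludes $u_R=u_R^*$. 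You instead push the equality down to the superlevel sets $A_t$ and re-prove the rigidity by hand via the bathtub principle, with the two--regime case analysis $\rho(t)\le R$ versus $\rho(t)>R$; your nestedness/Fubini reassembly at the end is sound since $\rho$ is non-increasing. What your route buys: Proposition~\ref{LL} is only needed for characteristic functions (where it reduces to elementary bathtub uniqueness); the degenerate regime $\rho(t)>R$ makes completely explicit why nothing can be concluded about $u$ outside $\{|x|<R\}$ (one only gets $B_R\subseteq A_t$ up to a null set); and since the exponent $2$ plays no role, the argument generalizes verbatim to $\int V\,\Phi(u)\,dx$ for any increasing $\Phi$. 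What the paper's truncation buys: a three-line proof in which the single monotonicity inequality $\int V|u_R^*|^2\le\int V|u^*|^2$ silently absorbs both of your regimes at once (mass of $A_t$ escaping $B_R$, and saturation $B_R\subseteq A_t$), at the price of invoking the full equality case of Proposition~\ref{LL} rather than just its indicator-function instance. One caveat, shared by both arguments: your subtraction of the two layer--cake integrals to get a.e.-in-$t$ equality, like the paper's appeal to the equality case of Proposition~\ref{LL}, implicitly assumes the common value of the two integrals is finite; this is harmless in the application, where $u$ is a minimizer and all quantities are finite, but is worth stating when $u$ is merely measurable and vanishing at infinity.
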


\begin{proof}
We defined $u_R(x)= u(x) \cdot \chi_{|x|<R}(x)$.
First note that by \eqref{fstar} we have 
\begin{equation}\label{1}
\int_{\R^n} V(|x|) |u_R^*|^2\geq  \int_{\R^n} V(|x|) |u_R|^2
\end{equation}
where we used that  $V(x)=V^*(x)$ holds since $V$ is radially symmetric decreasing, see \cite[Proposition 2.4]{HaSt1}. Now by elementary considerations 
since $u_R(x)\leq u(x)$
we have $u_R^*(x)\leq u^*(x)$
and hence 
$$\int_{\R^n} V(|x|) |u^*|^2 \geq  \int_{\R^n} V(|x|) |u_R^*|^2\geq
\int_{\R^n} V(|x|) |u_R|^2=\int_{\R^n} V(|x|) |u|^2$$
where
in the second inequality we have used \eqref{1} and in the last identity
the assumption $\supp V(x)=\{x\in \R^n||x|<R\}$.
Since by assumption 
$$\int_{\R^n} V(|x|) |u^*|^2= \int_{\R^n} V(|x|) |u|^2 dx$$ 
we get 
by the inequalities above $\int_{\R^n} V(|x|) |u_R^*|^2= \int_{\R^n} V(|x|) |u|^2 dx$, 
that in turn, by using again the assumption $\supp V(x)=B_R$, implies
$$\int_{B_R} V(|x|) |u_R^*|^2=\int_{\R^n} V(|x|) |u_R^*|^2=
\int_{\R^n} V(|x|) |u|^2=\int_{B_R} V(|x|) |u_R|^2.$$
We conclude by Proposition \ref{LL}.
\end{proof}

{\bf Proof of Theorem~\ref{nab}}
To prove \eqref{cam1} notice that for every $R>0$ we can introduce the function
$V_R(|x|)=\min \{V(|x|),V(R)\}$.
Then we get that
$x\rightarrow -V_R(|x|) +V(R)$ is decreasing, radially symmetric
and positive.
Hence
by \eqref{fstar} we get
$$\int_{\R^n}  (-V_R(|x|) +V(R)) |u|^2 dx\leq  \int_{\R^n}  (-V_R(|x|) +V(R)) |u^*|^2 dx.$$
Since moreover $\int_{\R^n}  |u|^2 dx=\int_{\R^n}  |u^*|^2 dx$,
then we deduce
$$\int_{\R^n}  (V_R(|x|) |u|^2 dx\geq \int_{\R^n}  (V_R(|x|) |u^*|^2 dx.$$
We conclude the proof of \eqref{cam1}
since $\lim_{R\rightarrow \infty}
V_R(|x|)=V(|x|)$ and by using the Beppo Levi monotonic
convergence theorem.

Next we prove \eqref{cam2}. We claim that
\begin{equation}\label{claimbes}\int_{\R^n}  V_{R_0}
(|x|)|u^*|^2 dx= \int_{\R^n}  V_{R_0}(|x|) |u|^2 dx
\quad \forall R_0>0\end{equation}
where $V_{R_0}(|x|)$ is defined as above with a truncation procedure.
Notice that \eqref{claimbes} is equivalent to
$$
\int_{\R^n}  (-V_{R_0}(|x|) + V(R_0))|u^*|^2 dx = \int_{\R^n}  (-V_{R_0}(|x|) + V(R_0)) |u|^2 dx.
$$
Since the potential $x \to -V_{R_0}(|x|) + V(R_0)$ satisfies the assumptions
of Proposition \ref{boh} for $R=R_0$  we deduce that 
the restriction of $u$ on $\{x\in \R^n||x|<R_0\}$ is decreasing radially symmetric.
Since $R_0>0$ is arbitrary we conclude that $u(x)=u(|x|)$
and $u$ is decreasing on $\R^n$, and thus that $u = u^*$. 
In order to prove \eqref{claimbes} first 
notice that since $V-V_{R_0}$ is increasing and positive, then
we can apply \eqref{cam1} to deduce
\begin{equation}\label{inc}
\int_{\R^n}  (V(|x|)-V_{R_0}(|x|))|u^*|^2 dx\leq \int_{\R^n}  (V(|x|)-V_{R_0}(|x|)) |u|^2 dx.
\end{equation}
Assume now by contradiction that 
\eqref{claimbes} is false,
then we have 
\begin{equation}\label{inc2}\int_{\R^n}  V_{R_0} (|x|)|u^*|^2 dx< \int_{\R^n}  V_{R_0}(|x|) |u|^2 dx.\end{equation}
By summing \eqref{inc} and \eqref{inc2} we get
$\int_{\R^n}  V(|x|)|u^*|^2 dx< \int_{\R^n}  V(|x|) |u|^2 dx$,
which gives a contradiction with the hypothesis done in \eqref{cam2}. \qed

% \hfill$\Box$

\section{On the canonical form of complex minimizers}

The result below is the key to derive the structure of the set of our complex minimizers.

\begin{theoreme}\label{elem}
Let $w\in C^1(\R^n;\C\setminus\{0\})$ be such that
$$\int_{\R^n} \sum_j |\partial_{x_j} |w||^2 dx=\int_{\R^n} \sum_{j} |\partial_{x_j} w|^2 dx
$$
then we have 
$$w(x)=e^{i\theta} \rho(x)$$ where $\theta \in \R$ is a constant and $\rho(x)\in \R$ for every $x\in \R^n$.
\end{theoreme}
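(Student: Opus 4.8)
The plan is to reduce the equality of integrals to a pointwise identity and then read off that the phase of $w$ is constant. Writing $\rho=|w|$, note that $\rho\in C^1(\R^n)$ and $\rho>0$ everywhere since $w$ never vanishes, so all the manipulations below are legitimate. From $\rho^2=w\bar w$ one gets $2\rho\,\partial_{x_j}\rho=\overline{w}\,\partial_{x_j}w+w\,\overline{\partial_{x_j}w}=2\,\Re(\bar w\,\partial_{x_j}w)$, hence $\partial_{x_j}\rho=\Re(\bar w\,\partial_{x_j}w)/|w|$. Using $|\partial_{x_j}w|^2=|w|^{-2}\,|\bar w\,\partial_{x_j}w|^2$, a direct computation then yields the differentiated form of the diamagnetic inequality at every point:
\[
|\partial_{x_j}w|^2-|\partial_{x_j}\rho|^2=\frac{(\Im(\bar w\,\partial_{x_j}w))^2}{|w|^2}\geq 0,\qquad j=1,\dots,n.
\]
Summing over $j$ exhibits $\sum_j|\partial_{x_j}w|^2-\sum_j|\partial_{x_j}|w||^2$ as a continuous, nonnegative function on $\R^n$.

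The hypothesis states exactly that this nonnegative continuous function has vanishing integral (the common value being finite in the application, where $w\in H^1$), so it must vanish identically. Therefore
\[
\Im(\bar w\,\partial_{x_j}w)=0\qquad\text{for all }j\text{ and all }x\in\R^n.
\]
To conclude I would look at the $C^1$ map $g=w/|w|\colon\R^n\to\S^1$. Differentiating $g\bar g=1$ shows $\bar g\,\partial_{x_j}g$ is purely imaginary, and the identity $\rho\,\partial_{x_j}\rho=\Re(\bar w\,\partial_{x_j}w)$ gives the short computation
\[
\bar g\,\partial_{x_j}g=\frac{\bar w\,\partial_{x_j}w-\Re(\bar w\,\partial_{x_j}w)}{|w|^2}=\frac{i\,\Im(\bar w\,\partial_{x_j}w)}{|w|^2}=0.
\]
Multiplying by $g$ and using $|g|^2=1$ yields $\partial_{x_j}g=g(\bar g\,\partial_{x_j}g)=0$ for every $j$, so $g$ is constant on the connected set $\R^n$, say $g\equiv e^{i\theta}$. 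Then $w=e^{i\theta}\rho$ with $\rho=|w|$ real-valued, which is the assertion.

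The computations are elementary; the one point needing care is the passage from the equality of integrals to the pointwise identity, where continuity and nonnegativity of the difference (so that a zero integral forces pointwise vanishing) are essential, and where the strict positivity $w\neq 0$ guarantees that $\rho$ and $g$ are genuinely $C^1$. An equivalent route is to lift $w/|w|$ to a global $C^1$ phase $\phi$ — possible because $\R^n$ is simply connected — for which $\Im(\bar w\,\partial_{x_j}w)=\rho^2\,\partial_{x_j}\phi$, forcing $\partial_{x_j}\phi\equiv0$; I prefer the argument through $g$ since it sidesteps the lifting altogether.
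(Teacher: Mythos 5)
Your proof is correct and takes essentially the same route as the paper: both hinge on the pointwise splitting $|\partial_{x_j}w|^2=|\partial_{x_j}|w||^2+\rho^2|\partial_{x_j}u|^2$ with $u=w/|w|$ (your term $(\Im(\bar w\,\partial_{x_j}w))^2/|w|^2$ is exactly $\rho^2|\partial_{x_j}u|^2$), after which the equality of integrals forces the nonnegative phase contribution to vanish and hence $w/|w|$ to be constant on the connected set $\R^n$. The only difference is cosmetic: the paper factorizes $w=\rho u$ at the outset and concludes from $\int\rho^2|\nabla u|^2\,dx=0$, whereas you derive the pointwise identity directly and pass through the continuity of the integrand.
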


\begin{proof}
We write $w = \rho u$ where $\rho=|w| >0$. Since $w \in C^1(\R^n, \C\setminus\{0\})$ it follows that $|w|$ and 
$u \in C^1(\R^n; \C\setminus\{0\})$. Using that $|u|=1$ we get
$$\nabla w = (\nabla \rho) u + \rho  \nabla u=u(\nabla \rho  + \rho \bar{u} \nabla u).$$
Again from $|u|=1$ it follows that $\Re(\bar{u}\nabla u)=0$ that is $\bar{u}\nabla u$ is purely imaginary. Then $|\nabla w|^2 = |\nabla \rho|^2 + \rho^2 |\nabla u|^2$. Now since $|w|= \rho $, we have that $|\nabla |w|| = |\nabla \rho|$.
 At this point our assumption gives
$$\int_{\R^n} |\nabla \rho|^2 dx= \int_{\R^n}( |\nabla \rho|^2  + |\rho \nabla u|^2) dx $$
and thus $\int_{\R^n} |\rho \nabla u|^2 dx = 0$ which leads to $\nabla u=0$.
\end{proof}

\bibliographystyle{plain}
%\bibliography{updated_biblio}

\end{document}